\newcommand{\bC}{{\mathbb{C}}}
\newcommand{\bF}{{\mathbb{F}}}
\newcommand{\bP}{{\mathbb{P}}}
\newcommand{\bQ}{{\mathbb{Q}}}
\newcommand{\bR}{{\mathbb{R}}}
\newcommand{\bZ}{{\mathbb{Z}}}
  \newcommand{\A}{{\mathcal{A}}}
  \newcommand{\B}{{\mathcal{B}}}
  \newcommand{\C}{{\mathcal{C}}}
  \newcommand{\D}{{\mathcal{D}}}
  \newcommand{\G}{{\mathcal{G}}}
  \newcommand{\J}{{\mathcal{J}}}
  \newcommand{\M}{{\mathcal{M}}}
  \newcommand{\N}{{\mathcal{N}}}
\renewcommand{\S}{{\mathcal{S}}}
  \newcommand{\T}{{\mathcal{T}}}
  \newcommand{\U}{{\mathcal{U}}}
  \newcommand{\V}{{\mathcal{V}}}
\newcommand{\Gal}{\operatorname{Gal}}
\newcommand{\GL}{\operatorname{GL}}
\newcommand{\PGL}{\operatorname{PGL}}
\newcommand{\Aut}{\operatorname{Aut}}
\newcommand{\fC}{\mathfrak{C}}
\newcommand{\ep}{\varepsilon}
\newcommand{\ol}{\overline}
\newcommand{\upchi}{{\raise.35ex\hbox{$\chi$}}}
\newtheorem{theorem}{Theorem}[section]
\newtheorem{proposition}[theorem]{Proposition}
\newtheorem{lemma}[theorem]{Lemma}
\theoremstyle{definition}
\numberwithin{equation}{section}
\begin{document}

\title{On binary cubic and quartic forms}
\author{Stanley Yao Xiao}
\address{University of Oxford, Mathematical Institute, Andrew Wiles Building,
Radcliffe Observatory Quarter,
Woodstock Road,
Oxford, UK OX2 6GG}
\email{stanley.xiao@maths.ox.ac.uk}
\subjclass[2010]{Primary 11N37, Secondary 11N36, 11D59}%
\keywords{determinant method, representation of integers by binary forms, Thue-Mahler inequality}%
\date{\today}


\begin{abstract} In this paper we determine the group of rational automorphisms of binary cubic and quartic forms with integer coefficients and non-zero discriminant in terms of certain quadratic covariants of cubic and quartic forms. This allows one to give precise asymptotic formulae for the number of integers in an interval representable by a binary cubic or quartic form and extends work of Hooley. Further, we give the field of definition of lines contained in certain cubic and quartic surfaces related to binary cubic and quartic forms. 
\end{abstract}

\maketitle

\section{Introduction}
\label{S1}

Let $F$ be a binary form with integer coefficients, non-zero discriminant, and degree $d \geq 3$. For each positive number $Z$, put $R_F(Z)$ for the number of integers of absolute value at most $Z$ which is representable by the binary form $F$. In \cite{Hoo1}, \cite{Hoo1-2}, and \cite{Hoo2} C.~Hooley gave explicitly the asymptotic formula for the quantity $R_F(Z)$ when $F$ is an irreducible binary cubic form or a biquadratic quartic form. Various authors have dealt with the case when $F$ is a diagonal form; see \cite{SX} for a summary of these results.\\ \\
In \cite{SX}, Stewart and Xiao proved the existence of an asymptotic formula for $R_F(Z)$ for all $F$ with integer coefficients, non-zero discriminant, and degree $d \geq 3$. More precisely, they proved that for each such $F$, there exists a positive rational number $W_F$ which depends only on $F$ for which the asymptotic formula
\begin{equation} \label{RFZ} R_F(Z) \sim W_F A_F Z^{\frac{2}{d}}  \end{equation}
holds with a power-saving error term. Here $A_F$ is the area of the region
\[\{(x,y) \in \bR^2 : |F(x,y)| \leq 1\}.\]
The power-saving error term is obtained from the $p$-adic determinant method developed by Heath-Brown in \cite{HB1}, and its subsequent refinement by Salberger \cite{S2}. \\ 

Stewart and Xiao showed that $W_F$ is an explicit function of the \emph{rational automorphism group} of $F$. To define this group, consider the substitution action of $\GL_2(\bC)$ on binary forms given as follows: for $T = \left(\begin{smallmatrix} t_1 & t_2 \\ t_3 & t_4 \end{smallmatrix} \right) \in \GL_2(\bC)$, put
\begin{equation} \label{sub action} F_T(x,y) = F(t_1 x + t_2 y, t_3 x + t_4 y).\end{equation}
Then the rational automorphism group $\Aut F$ of a binary form $F$ is defined to be:
\begin{equation} \label{Aut} \Aut F = \left\{ T  \in \GL_2(\bQ) : F_T(x,y) = F(x,y) \right \}.\end{equation}
We shall also denote by $\Aut_\bF F$ to be the maximal subgroup of $\GL_2(\bF)$ which fixes $F$ via the action (\ref{sub action}), for any subring $\bF$ of the complex numbers $\bC$. \\ 

In \cite{SX}, it was not shown how to obtain $\Aut F$, and therefore $W_F$, explicitly from the coefficients of $F$, except for the case of diagonal forms of the shape $Ax^d + By^d$. In general, this can be quite difficult. We shall show in this paper how to determine $W_F$ when $d = 3,4$, and thereby completing the work of Hooley in \cite{Hoo1}, \cite{Hoo1-2}, and \cite{Hoo2} for degrees 3 and 4. The goal of this paper is to prove the following: 

\begin{theorem} \label{MT1} Let $F$ be a binary form with integer coefficients, non-zero discriminant, and degree $d \in \{3,4\}$. Then for each $d = 3,4$ there exists a positive number $\beta_d < 2/d$ which depends only on $d$ and a positive rational number $W_F$ such that for all $\ep > 0$, the asymptotic formula
\begin{equation} \label{RFZ2} R_F(Z) = W_F A_F Z^{\frac{2}{d}} + O_{F,\ep} \left(Z^{\beta_d + \ep}\right)
\end{equation} 
holds. Moreover, the quantity $W_F$ can be explicitly determined in terms of the coefficients of $F$. 
\end{theorem}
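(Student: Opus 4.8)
The plan is to reduce Theorem~\ref{MT1} to two separate tasks: first, an \emph{explicit determination} of $\Aut F$ from the coefficients of $F$ for $d \in \{3,4\}$; and second, invoking the existing machinery of \cite{SX} to convert this into the stated asymptotic formula with power-saving error term. Since \cite{SX} already establishes the existence of the asymptotic \eqref{RFZ} with $W_F$ an explicit rational function of the combinatorial data of $\Aut F$ (the order of the group, and which of the finitely many conjugacy classes of finite subgroups of $\GL_2(\bQ)$ it belongs to), the only genuinely new content is making $\Aut F$ computable. So the heart of the proof is: given the coefficients of $F$, write down $\Aut F$.

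\medskip

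\noindent\textbf{Step 1: classify the abstract possibilities.} Any $T \in \Aut F$ permutes the $d$ roots of $F$ on $\bP^1$, and since $F$ has non-zero discriminant these roots are distinct; a M\"obius transformation fixing $\geq 3$ points is the identity, so for $d \geq 3$ the map $\Aut F \to S_d$ is injective, hence $\Aut F$ is finite. Finite subgroups of $\PGL_2(\bC)$ are cyclic, dihedral, or one of $A_4, S_4, A_5$; intersecting with the constraint of being defined over $\bQ$ and fixing a degree-$3$ or degree-$4$ form, the list of possible $\Aut F$ is very short (for $d=3$: trivial, $\bZ/2$, or $S_3$; for $d=4$: trivial, $\bZ/2$, $(\bZ/2)^2$, $\bZ/4$, or $D_4$, with the dihedral/Klein cases further constrained). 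This classification is essentially forced and should be recorded as a lemma.

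\medskip

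\noindent\textbf{Step 2: detect each case via covariants.} The key idea, as announced in the abstract, is to use the classical \emph{quadratic covariants}: for a binary cubic $F$ the Hessian $H_F$ (a binary quadratic form, the Hessian determinant of $F$ up to scalar), and for a binary quartic $F$ the covariants classically denoted $g_2, g_3$ together with the sextic covariant — but more usefully a distinguished quadratic covariant built from the resolvent cubic. An automorphism of $F$ is automatically an automorphism (up to the correct power of $\det$) of each covariant, and conversely the covariants are rigid enough that $\Aut F$ is cut out by compatibility with them. Concretely: $T \in \Aut F$ forces $T$ to permute the roots of the relevant quadratic covariant, which pins $T$ down to a two-parameter family, and then imposing $F_T = F$ on this family gives finitely many candidates, each of which is tested by a polynomial identity in the coefficients of $F$. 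The upshot is a decision procedure: evaluate a few explicit polynomials/resultants in the coefficients of $F$ (e.g.\ whether the Hessian is proportional to a power of a linear form, whether the resolvent cubic has a rational root with prescribed further splitting behavior, sign conditions on a discriminant), and read off both the isomorphism type of $\Aut F$ and its conjugacy class in $\GL_2(\bQ)$, hence $W_F$.

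\medskip

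\noindent\textbf{Step 3: assemble.} Combining Step~2 with the formula of \cite{SX} expressing $W_F$ in terms of $\Aut F$ yields that $W_F$ is explicitly determined by the coefficients of $F$; the asymptotic \eqref{RFZ2} with a specific $\beta_d < 2/d$ then follows directly from the $p$-adic determinant method bounds of Heath-Brown \cite{HB1} and Salberger \cite{S2} as already applied in \cite{SX} (for $d=3,4$ one can take, e.g., $\beta_3, \beta_4$ the explicit exponents coming from those papers). I expect the main obstacle to be Step~2 in the quartic case: the quartic has more degrees of freedom, its automorphism group has several non-trivial types (cyclic of order $4$ versus Klein four versus dihedral of order $8$), and distinguishing them requires a careful analysis of the resolvent cubic and of when the relevant quadratic covariant factors over $\bQ$ versus a real quadratic field versus an imaginary one — the sign and splitting conditions interact, and getting a clean set of covariant-theoretic criteria (rather than an unenlightening Gr\"obner-basis computation) is where the real work lies. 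A secondary subtlety is that $\Aut F$ is only a \emph{rational} automorphism group, so one must be careful throughout to track fields of definition — which is also why the paper includes the companion statements about fields of definition of lines on the associated cubic and quartic surfaces.
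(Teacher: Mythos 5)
Your overall strategy coincides with the paper's: Theorem \ref{MT1} is deduced from Theorem 1.2 of \cite{SX} (which gives the asymptotic with $W_F$ an explicit function of $\Aut F$) together with an explicit determination of $\Aut F$ via quadratic covariants --- the Hessian for cubics, and covariants attached to the resolvent cubic for quartics. So Steps 1 and 3 are fine in outline. The difficulty is that your Step 2, which you yourself flag as ``where the real work lies,'' is exactly the content of the theorem, and it is left as a description of a decision procedure rather than an actual one; as written it does not yet establish that $W_F$ is explicitly determined by the coefficients of $F$.

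Two concrete gaps. First, your abstract classification for $d=3$ omits $\C_3$: when $F$ is irreducible and $\Delta(F)$ is a square (Galois group $\A_3$), $\Aut F$ is cyclic of order $3$, generated by a rational matrix built from the Hessian $q_F$ and $\sqrt{-3\Delta(F)}$ (the paper's $\N_{q_F}$); this is one of the four cases of Theorem \ref{BCFMT} and cannot be recovered from a list containing only the trivial group, $\C_2$ and $\S_3$. The order-$2$ automorphisms, by contrast, are not visible from the Hessian alone: they come from the irrational Julia covariants $J_\theta$ attached to the individual roots $\theta$ of $F(x,1)$, and are rational precisely when $\theta$ is. Second, in the quartic case the correct rationality criterion is subtler than ``does the relevant covariant factor over $\bQ$.'' The paper factors the sextic covariant as $F_6 = \fC_1\fC_2\fC_3$ into the three Cremona covariants and shows the candidate automorphisms are $\M_{\fC_i}/\sqrt{\Delta(\fC_i)}$; such a matrix lies in $\GL_2(\bQ)$ if and only if $\fC_i$ is proportional to an integral form $f$ with $|\Delta(f)|$ a perfect square (the paper's ``rationally good'' condition), and these three factors are singled out among all quadratic divisors of $F_6$ by the vanishing of the invariant $J$ of the complementary quartic $F_6/\fC_i$. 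Without identifying these specific tests --- a square-discriminant condition on each covariant, not merely a splitting or sign condition --- the claim that $\Aut F$, and hence $W_F$, is computable from the coefficients remains unproved.
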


Theorem \ref{MT1} will follow from Theorem 1.2 in \cite{SX} and Theorems \ref{BCFMT} and \ref{BQFMT}, which give explicit methods to determine $\Aut F$ from the coefficients of $F$ when $\deg F = 3,4$ respectively. \\ \\
It remains to give an explicit expression for the quantity $A_F$ in terms of the coefficients of $F$. In the cubic case this was done by Hooley himself in \cite{Hoo1} and \cite{Hoo1-2}, where he showed that $A_F$ is a constant times a power of the discriminant of $F$, and that this constant is expressible in terms of values of the gamma function. Bean gave an explicit formula for $A_F$ in terms of hypergeometric functions in \cite{BE}. We further note that for all $d \geq 3$ (not necessarily $d \in \{3,4\}$), $A_F$ was determined by Stewart and Xiao in \cite{SX} in the case of diagonal forms of the shape $Ax^d + By^d$, where they gave $A_F$ in terms of $\Delta(F)$ and the gamma function. \\

To obtain an explicit expression for $\Aut F$, we actually achieve slightly more in that we determine $\Aut_\bF F$ over any field $\bF$ containing the field of definition of $F$, including the complex numbers $\bC$. Our explicit characterization of automorphism groups of binary cubic and quartic forms allows us to study lines on algebraic surfaces of the shape 
\begin{equation} \label{surface def} X_F : F(x_1, x_2) - F(x_3, x_4) = 0.\end{equation}

For cubic surfaces, it is a celebrated theorem of Cayley and Salmon that cubic surfaces contain exactly 27 lines over an algebraically closed field. However, for a cubic surface defined over $\bQ$, these lines are typically not defined over $\bQ$. There exists a unique smallest finite extension $K/\bQ$ such that all $27$ lines are defined. In particular, for the generic cubic surface defined over $\bQ$, this field is Galois over $\bQ$ and its Galois group is isomorphic to $W(E_6)$, the Weyl group for the $E_6$ root system. Ekedahl \cite{Eke} found an explicit example of a cubic surface which realizes this bound. We shall prove that when $F$ is a cubic form the field of definition of the lines on the surface $X_F$ given by (\ref{surface def}) is very small. \\

For quartic surfaces, it is not known in general how many lines they contain. The generic quartic surface contains no lines; see \cite{BSa}. Recall that the $\PGL_2(\bC)$-automorphism group of a binary form $F$ is the maximal subgroup of $\PGL_2(\bC)$ which permutes the projective roots of $F$ via action by Mobius transformation. It is a consequence of Theorem 3.1 in \cite{BSa} that the surface $X_F$ given in (\ref{surface def}) contains exactly $d(d + \upsilon_F)$ many lines, where $\upsilon_F$ is the number of elements in the $\PGL_2(\bC)$-automorphism group of $F$ and $d$ is the degree of $F$. For the quartic case this was already known to Segre; see \cite{Seg}. \\ 

Put
\[F(x,y) = a_4 x^4 + a_3 x^3 y + a_2 x^2 y^2 + a_1 xy^3 + a_0 y^4.\]
The ring of polynomial invariants of binary quartic forms with respect to the action (\ref{sub action}) is generated by two elements, usually denoted by $I(F)$ and $J(F)$, given by
\begin{equation} \label{I} I(F) = 12 a_4 a_0 - 3 a_3 a_1 + a_2^2 \end{equation}
and
\begin{equation} \label{J} J(F) = 72 a_4 a_2 a_0 + 9 a_3 a_2 a_1 - 27 a_4 a_1^2 - 27 a_0 a_3^2 - 2 a_2^3.\end{equation}
It is known from the work of Klein \cite{Kle} and later Segre \cite{Seg} that the $\PGL_2(\bC)$-automorphism group of a binary quartic form $F$ with complex coefficients and non-zero discriminant is isomorphic to the Klein group $\C_2 \times \C_2$ unless the invariants $I(F), J(F)$ vanishes. Specifically, the $\PGL_2(\bC)$-automorphism group of a quartic form $F$ is isomorphic to the dihedral group $\D_4$ if $J(F) = 0$ and isomorphic to the alternating group $\A_4$ if $I(F) = 0$; see Proposition \ref{PGL auto prop}. We do not know how to explicitly determine the field of definitions of the lines on $X_F$ corresponding to the extra automorphisms when $I, J = 0$, but in the generic case when $I(F) \cdot J(F) \ne 0$, we can determine the field of definition of all lines on $X_F$. \\

We thus obtain the following theorem:

\begin{theorem} \label{line thm} Let $F$ be a binary cubic or quartic form with non-zero discriminant and integer coefficients. Let $X_F$ be the algebraic surface defined by (\ref{surface def}). Then 
\begin{itemize} 
\item[(a)] for $\deg F = 3$,  $X_F$ contains exactly $27$ distinct lines over $\ol{\bQ}$, and these lines are defined over a field of degree at most $12$ over $\bQ$.
\item[(b)] for $\deg F = 4$, $X_F$ contains exactly $32$ distinct lines over $\ol{\bQ}$ if both $I(F)$ and $J(F)$ are non-zero, $48$ lines when $J(F) = 0$, and $64$ lines when $I(F) = 0$. Further, when $I(F) \cdot J(F) \ne 0$, these lines are defined over a field of degree at most $48$ over $\bQ$.
\end{itemize}
\end{theorem}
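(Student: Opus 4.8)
The plan is to write down every line on $X_F$ explicitly, sort them according to how they project onto the two factors $\bP^1$ of $\bP^3=\bP(\bC^2\oplus\bC^2)$, and then read off their fields of definition from the explicit description of $\Aut_\bC F$ obtained in Theorems~\ref{BCFMT} and~\ref{BQFMT}. Since $\Delta(F)\ne 0$ the gradient of $F(x_1,x_2)-F(x_3,x_4)$ vanishes only at the origin, so $X_F\subset\bP^3$ is a smooth surface; after a change of variables in $\GL_2(\bQ)$, which replaces $X_F$ by a $\bQ$-linearly equivalent surface and so does not change the fields of definition of lines, we may assume $a_d\ne 0$ and factor $F(x,y)=a_d\prod_{i=1}^d(x-\theta_i y)$ over $\ol{\bQ}$. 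Neither $\{x_1=x_2=0\}$ nor $\{x_3=x_4=0\}$ lies on $X_F$, so on any line $\Lambda\subset X_F$ each of the two projections is a morphism to $\bP^1$ of degree $0$ or $1$. If both have degree $1$, then $\Lambda=\Lambda_A:=\{[v:Av]:v\in\bC^2\}$ for a unique $A\in\GL_2(\bC)$, and $\Lambda_A\subset X_F$ exactly when $F(v)=F(Av)$ identically, i.e. $F_A=F$, i.e. $A\in\Aut_\bC F$. If one of the projections has degree $0$, a short analysis using that $F$ has distinct roots shows both are constant with values roots of $F$, whence $\Lambda=\ell_{ij}:=\{x_1-\theta_ix_2=0\}\cap\{x_3-\theta_jx_4=0\}$ for some $i,j$. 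A line cannot be of both types and $A\mapsto\Lambda_A$ is injective, so $X_F$ carries exactly $|\Aut_\bC F|+d^2$ lines; as $\Aut_\bC F$ surjects onto the $\PGL_2(\bC)$-automorphism group of $F$ with kernel the scalar $d$th roots of unity, $|\Aut_\bC F|=d\,\upsilon_F$, and with $\upsilon_F$ as in Proposition~\ref{PGL auto prop} this yields $27$ lines for $d=3$ and $32$, $48$, $64$ for $d=4$ according as $I(F)J(F)\ne 0$, $J(F)=0$, $I(F)=0$ — in agreement with the Cayley--Salmon theorem and with Theorem~3.1 of~\cite{BSa}.

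For the fields of definition: $\ell_{ij}$ is defined over $\bQ(\theta_i,\theta_j)$, so all the $\ell_{ij}$ are defined over the splitting field $K_F$ of $F$, of degree at most $d!$; and since $\sigma(\Lambda_A)=\Lambda_{\sigma(A)}$ for $\sigma\in\Gal(\ol{\bQ}/\bQ)$, injectivity of $A\mapsto\Lambda_A$ shows that $\Lambda_A$ is defined over a field $E$ precisely when $A\in\GL_2(E)$. Hence the whole line configuration is defined over the compositum of $K_F$ with the field $\bQ(\Aut_\bC F)$ generated by the entries of every automorphism of $F$, and the theorem reduces to bounding this compositum.

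When $d=3$ this is straightforward: the $\PGL_2(\bC)$-automorphism group of $F$ is $S_3$ and each of its elements is defined over $K_F$; if $\phi$ is a transposition with trace-zero lift $T\in\GL_2(K_F)$ then $T^2=\nu I$ and $F_T=\mu F$ with $\nu,\mu\in K_F$, and comparing $F_{T^2}=\nu^3F$ with $F_{T^2}=\mu^2F$ gives $\nu^3=\mu^2$, so $(\nu/\mu)T$ is an order-$2$ element of $\Aut_\bC F$ lying in $\GL_2(K_F)$. Two such, together with the scalar matrices $\{\zeta I:\zeta^3=1\}$, generate $\Aut_\bC F$, so $\Aut_\bC F\subset\GL_2(K_F(\zeta_3))$ and all $27$ lines are defined over $K_F(\zeta_3)$, a field of degree at most $2\cdot 6=12$.

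When $d=4$ and $I(F)J(F)\ne 0$, the $\PGL_2(\bC)$-automorphism group of $F$ is the Klein group $\{1,\phi_1,\phi_2,\phi_3\}$, with $\phi_k$ the involution fixing the roots of the quadratic covariant $Q_k$ (the Jacobian of the two quadratic factors of $F$ for the $k$th pairing of $\theta_1,\dots,\theta_4$); the trace-zero lift $T_k$ and the scalars $\nu_k,\mu_k$ with $T_k^2=\nu_kI$, $F_{T_k}=\mu_kF$ lie in the field of the $k$th pairing, of degree at most $3$ over $\bQ$, and $\nu_k^4=\mu_k^2$. Using $\phi_1\phi_2=\phi_3$ it suffices to produce one element of $\Aut_\bC F$ over each of $\phi_1,\phi_2$ (the rest being their products with $iI$), and such an element is $\lambda_kT_k$ with $\lambda_k^4=1/\mu_k=\pm\nu_k^{-2}$, which a priori costs a square root of $\operatorname{disc}(Q_k)$ — equivalently, the fixed point of $\phi_k$ — and a fourth root of unity. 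The heart of the argument, and its main obstacle, is to show from the explicit form of $\Aut_\bC F$ in Theorem~\ref{BQFMT} together with the identities tying $F$ to $Q_1,Q_2,Q_3$ — in particular that $Q_1Q_2Q_3$ is a constant multiple of the rational sextic covariant, so that $\prod_k\operatorname{disc}(Q_k)$ is a square in $K_F$ — that these several square-root and root-of-unity ambiguities fuse into a single quadratic extension of $K_F$; unlike the cubic case the quartic automorphisms are not rational over $K_F$ with merely a root of unity adjoined, and checking that the extra ambiguities do not multiply up beyond one quadratic step — keeping the bound at $48$ rather than a larger power of $2$ times $[K_F:\bQ]$ — is exactly where the classification of $\Aut_\bC F$ established earlier in the paper is indispensable.
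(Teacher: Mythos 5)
Your classification of the lines and your count are correct and run parallel to the paper's: splitting the lines according to the two projections onto $\bP^1$ reproduces the dichotomy of Lemma \ref{line cat} (root lines versus automorphism lines), and the count $d^2+|\Aut_\bC F|=d(d+\upsilon_F)$ is exactly Proposition \ref{BSa MT}, which the paper imports from \cite{BSa} rather than rederiving. Your cubic argument is also correct and is a genuinely cleaner route than the paper's: instead of exhibiting the explicit covariant matrices $\T_\theta$ and $\N_{q_F}$ and checking their fields of definition, you lift each transposition to a trace-zero $T\in\GL_2(\bF)$ (the splitting field, your $K_F$), use $F_{T^2}=\nu^3F=\mu^2F$, and exploit $\gcd(3,2)=1$ to rescale $T$ by the \emph{rational} factor $\nu/\mu$ into an honest automorphism over $\bF$. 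Both arguments land on the same field $\bF(\mu_3)$ of degree at most $12$.

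The quartic case, however, is not proved, and the gap you flag is real rather than a routine verification. You correctly observe that the lift $\M_{\fC_k}$ of the involution $\phi_k$ satisfies $F_{\M_{\fC_k}}=D_k^2F$ with $D_k=\Delta(\fC_k)$, so that the actual automorphisms over $\phi_k$ are $\mu_4^j\M_{\fC_k}/\sqrt{D_k}$ and each costs a square root of $D_k$; since $\gcd(4,2)=2$ the cubic rescaling trick is unavailable. You then assert that the several square roots ``fuse into a single quadratic extension of $K_F$'' and call this the heart of the argument without carrying it out. In fact the fusion does not occur. Take $F=x(x-y)(x-2y)(x-6y)=x^4-9x^3y+20x^2y^2-12xy^3$, with $I(F)=76$, $J(F)=-448$, $\Delta(F)=240^2$, and splitting field $\bF=\bQ$. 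With $\theta_1=6>\theta_2=2>\theta_3=1>\theta_4=0$ one computes $D_1=240$, $D_2=-96$, $D_3=160$, so the sixteen automorphism lines are collectively defined over $\bQ(i,\sqrt{15},\sqrt{6})$ (note $\sqrt{10}=\sqrt{6}\sqrt{15}/3$), a field of degree $4$ over $\bF(\mu_4)=\bQ(i)$ and degree $8$ over $\bF$ --- not one quadratic step. The relation $D_1D_2D_3=-64\Delta(F)$ eliminates only one of the three square roots, so a priori $[K:\bQ]$ can be as large as $4\cdot[\bF(\mu_4):\bQ]$, which exceeds $48$ when $\Gal F\cong\S_4$; any complete proof must show that $\sqrt{D_1},\sqrt{D_2}$ already lie in $\bF(\mu_4)$ in that regime, and neither your proposal nor, it should be said, the paper's own proof of Theorem \ref{line thm}(b) supplies this: the paper attaches the automorphism lines to the matrices $\M_{\fC_i}$ themselves, which do not satisfy $F_{\M_{\fC_i}}=F$ and hence do not give lines on $X_F$; the normalizing factor $1/\sqrt{D_i}$ that they omit is precisely the source of the extra quadratic extensions in the example above. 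So you have correctly located the obstruction, but the theorem's degree-$48$ bound remains unestablished by your argument.
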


We shall denote by $\C_n$ the cyclic group of order $n$, $\D_n$ for the dihedral group of order $2n$, $\A_n$ for the alternating group on $n$ letters, and $\S_n$ for the symmetric group on $n$ letters throughout this paper. Moreover, for a binary form $F$, we shall denote its discriminant by $\Delta(F)$.
 
\section{Automorphism groups of binary cubic and quartic forms over large fields}

For a binary form $F$ of degree $d$ with complex coefficients, let $\B_F$ denote the set of roots of $F$ in $\bP^1(\bC)$. An element $T = \left(\begin{smallmatrix} t_1 & t_2 \\ t_3 & t_4 \end{smallmatrix} \right) \in \PGL_2(\bC)$ acts on a point $\theta \in \bP^1(\bC)$ via the Mobius action
\begin{equation} \label{U action} T \theta = \frac{t_1 \theta + t_2}{t_3 \theta + t_4}.\end{equation}
For a finite set $\S \subset \bP^1(\bC)$, put
\[T \S = \{T \theta : \theta \in \S\}.\]
Define the $\PGL_2(\bC)$-\emph{automorphism group of }$F$ to be
\begin{equation} \label{PGL Aut} \Aut_\bC^\ast F = \{T \in \PGL_2(\bC) : T \B_F = \B_F\}. \end{equation}
It is easily seen that
\begin{equation} \Aut_\bC^\ast F = \{T \in \PGL_2(\bC) : F_T = \lambda F \text{ for some } \lambda \in \bC^\times\}. \end{equation}
It is well-known that $\Aut_\bC^\ast (F)$ can be embedded into the symmetric group $\S_{d}$ via the action (\ref{U action}) of $\PGL_2$ on the roots of $F(x,1)$, viewed as elements in $\bP^1(\bC)$. Moreover, the natural homomorphism
\[\Aut_\bC (F) \rightarrow \Aut_\bC^\ast (F)\]
has kernel given by $\{\mu_d I_{2 \times 2} : \mu_d \text{ is a } d \text{-th root of unity} \}$.   \\

We shall prove the following Proposition:

\begin{proposition} \label{PGL auto prop} Let $F$ be a binary form with non-zero discriminant, complex coefficients, and $\deg F \in \{3,4\}$. Then 
\[\Aut_\bC^\ast (F) \cong \begin{cases} \S_3, &\text{if } d = 3 \\
\C_2 \times \C_2, \D_4, \A_4, & \text{if } d = 4. \end{cases} \]
Moreover $\Aut_\bC^\ast(F) \cong \D_4$ when $d = 4$ if and only if $J(F) = 0$ and $\Aut_\bC^\ast(F) \cong \A_4$ if and only if $I(F) = 0$. 
\end{proposition}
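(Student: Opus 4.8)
The plan is to analyze $\Aut_\bC^\ast(F)$ through its faithful embedding into $\S_d$ afforded by the action on $\B_F$. For $d = 3$: a Möbius transformation is determined by its action on three points, so $\Aut_\bC^\ast(F) \hookrightarrow \S_3$; conversely, given any permutation of the three roots $\theta_1,\theta_2,\theta_3 \in \bP^1(\bC)$, there is a (unique) element of $\PGL_2(\bC)$ realizing it, since $\PGL_2(\bC)$ acts sharply $3$-transitively on $\bP^1(\bC)$. Hence $\Aut_\bC^\ast(F) \cong \S_3$ in the cubic case, with no further work.

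For $d = 4$, first observe that $\Aut_\bC^\ast(F) \hookrightarrow \S_4$, and that a nontrivial element fixing three of the four roots would be the identity (by sharp $3$-transitivity), so the image contains no $3$-cycles and no transpositions; thus $\Aut_\bC^\ast(F)$ embeds into the subgroup of $\S_4$ generated by double transpositions and $4$-cycles together with the products forced by closure — concretely it lands inside $\D_4 \cong \langle (1234), (13)\rangle$ or one of its conjugates, but since the double transpositions $(12)(34), (13)(24), (14)(23)$ form the Klein four-group $V$, and this $V$ always lies in $\Aut_\bC^\ast(F)$: indeed for any four distinct points of $\bP^1(\bC)$ there is an involution swapping them in pairs (send the pair-swapping cross-ratio configuration to itself; concretely the involution with the two fixed points being the "midpoints" of the pairs). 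So $V \leq \Aut_\bC^\ast(F)$, and comparing with the subgroups of $\S_4$ containing $V$ — namely $V$, $\D_4$, $\A_4$, $\S_4$ — rules out $\S_4$ (it contains transpositions), leaving exactly $V \cong \C_2\times\C_2$, $\D_4$, $\A_4$.

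It remains to match these three cases to the vanishing of $I(F)$ and $J(F)$. Here I would use the classical theory of the quartic and its covariants: associated to $F$ is the cubic resolvent, whose roots are the three cross-ratio-type quantities permuted by $\Aut_\bC^\ast(F)/V$, and whose discriminant and coefficients are polynomials in $I(F), J(F)$ (indeed $\Delta(F)$ is proportional to $I^3 - 27 J^2$ up to a constant, and the resolvent cubic has the form $4t^3 - I t - J/3$ or similar after normalization). The group $\Aut_\bC^\ast(F)$ is strictly larger than $V$ precisely when this resolvent cubic has a nontrivial symmetry: it picks up a $2$-torsion extra automorphism (giving $\D_4$) exactly when the resolvent has a pair of equal-up-to-the-right-relation roots, which the invariant-theoretic normalization identifies with $J(F) = 0$; and it picks up the extra $3$-cycles (giving $\A_4$) exactly when the resolvent cubic is itself symmetric under cyclic permutation of its roots, which is the condition $I(F) = 0$. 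Concretely I would normalize $F$ by a suitable $\PGL_2(\bC)$-transformation to a convenient model — e.g. $x^4 + 6m x^2 y^2 + y^4$ has the extra $\D_4$-symmetry $(x,y)\mapsto(x+y,x-y)$ up to scalar and one checks $J = 0$ for it; and the equianharmonic model with $\A_4$-symmetry has $I = 0$ — then invoke that $I,J$ generate the invariant ring (stated in the excerpt) so that the locus $\{J = 0\}$ (resp. $\{I = 0\}$) in the moduli of quartics is exactly the orbit closure of these special models. The main obstacle is precisely this last identification: verifying that the two special symmetry loci coincide with the hypersurfaces $J = 0$ and $I = 0$ and not merely with some other invariant combination. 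I would settle this by a dimension/degree count — each locus is an irreducible hypersurface in the one-dimensional moduli space (parametrized by $I^3/J^2$), so it suffices to exhibit one quartic with the extra symmetry on which the claimed invariant vanishes and which is not on the other special locus, which the explicit models above provide.
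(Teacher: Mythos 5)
Your cubic argument is fine, and your identification of the Klein four-group $V$ inside $\Aut_\bC^\ast(F)$ for quartics matches the paper's conclusion (the paper extracts $V$ from the Cayley normal form $x^4 + Ax^2y^2 + y^4$ rather than from pair-swapping involutions, which is cosmetic). The quartic case, however, has two genuine gaps. First, the claim that the image of $\Aut_\bC^\ast(F)$ in $\S_4$ contains no $3$-cycles and no transpositions does not follow from sharp $3$-transitivity: a transposition fixes two of the four roots and a $3$-cycle fixes one, so neither is excluded by the fact that an element fixing three points of $\bP^1(\bC)$ is the identity. The claim is in fact false: for $F = xy(x^2-y^2)$ one has $J(F)=0$, $\Delta(F) \ne 0$, and $z \mapsto -z$ fixes the roots $0,\infty$ while swapping $1,-1$, a transposition; more generally every copy of $\D_4$ in $\S_4$ containing $V$ contains two transpositions, and $\A_4$ contains $3$-cycles. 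Consequently your exclusion of $\S_4$ (``it contains transpositions'') is invalid, and taken literally it would exclude $\D_4$ as well. The correct exclusion, which is what the paper does, is that $\S_4$ contains elements of order $3$ and of order $4$ simultaneously; these force $I(F)=0$ and $J(F)=0$ respectively, whence $\Delta(F) = (4I(F)^3 - J(F)^2)/27 = 0$, a contradiction.

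Second, the matching of $\D_4$ with $J=0$ and $\A_4$ with $I=0$ is exactly the point you defer, and your proposed fix (a count in the one-dimensional moduli space plus one explicit model) proves only one implication: it shows the orbit with $J=0$ (resp.\ $I=0$) carries the extra symmetry, not that every quartic with the extra symmetry satisfies $J=0$ (resp.\ $I=0$). For that you would also need to know that the quartics with a given enlarged automorphism group form a single $\PGL_2(\bC)$-orbit, which you do not establish. The paper's Lemma 2.5 closes this directly: an order-$3$ element of $\Aut_\bC^\ast(F)$ may be conjugated to $\left(\begin{smallmatrix} 1 & 0 \\ 0 & \mu_3 \end{smallmatrix}\right)$ with $\mu_3$ a primitive cube root of unity, and a quartic fixed projectively by that matrix must have the shape $Ax(x^3+y^3)$, which visibly has $I=0$; an order-$4$ element similarly forces $F$ equivalent to $x^4+y^4$, hence $J=0$; the converses come from the normal forms in the Cayley lemma. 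Replacing your ``no transpositions'' step and your moduli-space step by this order-$3$/order-$4$ analysis would repair the proof and bring it in line with the paper's.
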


We have the following lemma for binary cubic forms with non-zero discriminant:
\begin{lemma} \label{cubic conj} Let $F$ be a binary cubic form with complex coefficients and non-zero discriminant. Then $F$ is $\GL_2(\bC)$ equivalent to $xy (x+y)$. 
\end{lemma}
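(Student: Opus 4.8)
\emph{Proof idea.} The plan is to exploit the standard dictionary between the substitution action (\ref{sub action}) of $\GL_2(\bC)$ on binary cubic forms and the Mobius action (\ref{U action}) of $\PGL_2(\bC)$ on $\bP^1(\bC)$, together with the (sharp) $3$-transitivity of the latter action. Since $\Delta(F) \ne 0$, the form $F$ has three distinct roots $\theta_1,\theta_2,\theta_3 \in \bP^1(\bC)$, while the target form $G(x,y) = xy(x+y)$ is a product of three pairwise non-proportional linear forms and hence has exactly the three distinct roots $[0:1]$, $[1:0]$, $[-1:1]$ in $\bP^1(\bC)$; note in particular that $G$ has no $x^3$ term, so that $[1:0]$ is genuinely a root.

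The first step is to record how the two actions interact: for $T = \left(\begin{smallmatrix} t_1 & t_2 \\ t_3 & t_4 \end{smallmatrix}\right) \in \GL_2(\bC)$, a point $[\xi:\eta] \in \bP^1(\bC)$ is a root of $F_T$ if and only if its image under the Mobius transformation attached to $T$ is a root of $F$; equivalently, the root set of $F_T$ is the preimage of the root set of $F$ under $T$. By $3$-transitivity, choose $T_0 \in \GL_2(\bC)$ whose associated Mobius map sends $[0:1] \mapsto \theta_1$, $[1:0] \mapsto \theta_2$ and $[-1:1] \mapsto \theta_3$. Then $F_{T_0}$ and $G$ are binary cubic forms with the same three simple zeros, hence scalar multiples of one another, so $F_{T_0} = c\,G$ for some $c \in \bC^\times$.

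Finally one removes the constant $c$: replacing the substitution matrix by $\lambda I_{2 \times 2}$ multiplies a cubic form by $\lambda^3$, so $G_{\lambda I_{2 \times 2}} = \lambda^3 G$; choosing $\lambda \in \bC^\times$ with $\lambda^3 = c^{-1}$ and putting $T = T_0 \cdot \lambda I_{2 \times 2}$ yields $F_T = (F_{T_0})_{\lambda I_{2 \times 2}} = c\lambda^3 G = G$, which is precisely the claim. One can also argue more concretely without $\bP^1$: factor $F = \ell_1\ell_2\ell_3$ into pairwise non-proportional linear forms over $\bC$, use that $\ell_1$ and $\ell_2$ are linearly independent to find $T \in \GL_2(\bC)$ carrying $F_T$ to the shape $xy(cx+dy)$ with $cd \ne 0$, and finish with a diagonal substitution together with a cube-root rescaling. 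There is no real obstacle here; the only points needing attention are getting the direction of the root correspondence right and the bookkeeping of the scalar, both of which rely on nothing beyond the freedom to rescale the substitution matrix.
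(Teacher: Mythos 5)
Your proposal is correct and follows essentially the same route as the paper, which simply invokes the $3$-transitivity of the $\PGL_2(\bC)$-action on $\bP^1(\bC)$; you merely spell out the details the paper leaves implicit (the root-set correspondence for $F_T$ and the removal of the scalar by a cube-root rescaling of the substitution matrix).
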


\begin{proof} This follows from the fact that the action of $\PGL_2(\bC)$ on $\bP^1(\bC)$ is 3-transitive. 
\end{proof} 

A similar lemma, due to Cayley (see \cite{Olv} for an account), holds in the quartic case:

\begin{lemma} \label{quartic conj} Let $F$ be a binary quartic form with complex coefficients and non-zero discriminant. Then there exists a complex number $A$ such that $F$ is $\GL_2(\bC)$ equivalent to $x^4 + A x^2 y^2 + y^4$. Moreover, every binary quartic form $F$ with complex coefficients and non-zero discriminant with $I(F) = 0$ is $\GL_2(\bC)$-equivalent to 
\[x(x^3 + y^3)\]
and equivalent to 
\[ x^4 + y^4\]
if $J(F) = 0$.  
\end{lemma}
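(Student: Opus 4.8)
The plan is to deduce everything from the classical fact (Cayley; see \cite{Olv}) that a binary quartic form over $\bC$ with non-zero discriminant is determined up to $\GL_2(\bC)$-equivalence by the value of $[I(F)^3 : J(F)^2] \in \bP^1(\bC)$, together with the observation that the one-parameter family
\[
F_A := x^4 + A x^2 y^2 + y^4 \qquad (A \in \bC)
\]
realizes every such value. Using (\ref{I}) and (\ref{J}) one computes $I(F_A) = A^2 + 12$ and $J(F_A) = 2A(36 - A^2)$, hence $4 I(F_A)^3 - J(F_A)^2 = 432(A^2 - 4)^2$; since $\Delta$ is a non-zero constant multiple of $4I^3 - J^2$, the form $F_A$ has non-zero discriminant precisely when $A \ne \pm 2$. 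Similarly $I(x(x^3+y^3)) = 0$ and $J(x(x^3+y^3)) = -27$, while $J(x^4+y^4) = 0$ and $I(x^4+y^4) = 12$, so both of these forms have non-zero discriminant.

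To prove the general assertion, observe that $A \mapsto [I(F_A)^3 : J(F_A)^2] = [(A^2+12)^3 : 4A^2(36-A^2)^2]$ is a non-constant morphism $\bP^1 \to \bP^1$ — it takes the value $[1:0]$ at $A = 0$ and $[0:1]$ at $A = 2i\sqrt{3}$ — hence is surjective. Given $F$ with $\Delta(F) \ne 0$ we have $[I(F)^3 : J(F)^2] \ne [1:4]$, and the full preimage of $[1:4]$ under the above morphism is $\{2, -2, \infty\}$; so there is a finite $A \ne \pm 2$ with $[I(F_A)^3 : J(F_A)^2] = [I(F)^3 : J(F)^2]$, and for this $A$ we have $\Delta(F_A) \ne 0$. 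By the classical classification there are $T_0 \in \GL_2(\bC)$ and $c \in \bC^\times$ with $F_{T_0} = c\,F_A$; replacing $T_0$ by $c^{-1/4} T_0$ — legitimate since a scalar matrix $\lambda I$ acts on a quartic by multiplication by $\lambda^4$ — yields $F_T = F_A$ for some $T \in \GL_2(\bC)$.

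For the two remaining cases, note that $I(F)=0$ and $J(F)=0$ are $\GL_2(\bC)$-invariant conditions ($I, J$ being relative invariants) which cannot hold simultaneously when $\Delta(F) \ne 0$. If $I(F) = 0$ then $[I(F)^3 : J(F)^2] = [0:1] = [I(x(x^3+y^3))^3 : J(x(x^3+y^3))^2]$, so by the argument of the previous paragraph (with $x(x^3+y^3)$ in place of $F_A$) the form $F$ is $\GL_2(\bC)$-equivalent to $x(x^3+y^3)$; likewise $J(F) = 0$ forces $[I(F)^3 : J(F)^2] = [1:0] = [I(x^4+y^4)^3 : J(x^4+y^4)^2]$, so $F$ is $\GL_2(\bC)$-equivalent to $x^4 + y^4$.

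The one input that is not a routine verification is the classical classification statement itself. It follows from the theory of the cross-ratio: a quartic with non-zero discriminant has four distinct roots in $\bP^1(\bC)$, the $\PGL_2(\bC)$-orbit of this four-point set is determined by the cross-ratio modulo the natural six-element $\S_3$-action $\lambda \mapsto \{\lambda, 1-\lambda, 1/\lambda, \dots\}$, and the $\S_3$-invariant $\tfrac{(\lambda^2-\lambda+1)^3}{\lambda^2(\lambda-1)^2}$ agrees up to a universal constant with $I(F)^3/\Delta(F)$, which is itself a function of $[I(F)^3 : J(F)^2]$. Passing from $\PGL_2(\bC)$-equivalence of root sets to $\GL_2(\bC)$-equivalence of forms is the same scalar absorption used above; in \cite{Olv} this reduction is carried out concretely by exhibiting a Möbius transformation that moves four given distinct points to a symmetric configuration $\{\,r,\ -r,\ r^{-1},\ -r^{-1}\,\}$, which is exactly the root set of $F_A$ with $A = -(r^2 + r^{-2})$.
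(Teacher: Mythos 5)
Your argument is correct, and it supplies a genuine proof where the paper supplies none: Lemma \ref{quartic conj} is stated in the text as a known result of Cayley with a pointer to \cite{Olv}, so there is no in-paper proof to compare against. Your route --- classify quartics with $\Delta \ne 0$ by the absolute invariant $[I^3 : J^2] \in \bP^1$, then check that the pencil $F_A = x^4 + Ax^2y^2 + y^4$ realizes every value of that invariant except the degenerate one $[1:4]$ --- is a clean and complete way to do this. I verified the computations: $I(F_A) = A^2+12$, $J(F_A) = 2A(36-A^2)$, $4I^3 - J^2 = 432(A^2-4)^2$, the map $A \mapsto [(A^2+12)^3 : 4A^2(36-A^2)^2]$ has degree $6$ with the fiber over $[1:4]$ exactly $\{2,-2,\infty\}$ (each with multiplicity $2$), and the scalar-absorption step $T_0 \mapsto c^{-1/4}T_0$ is valid since a scalar matrix $\lambda I$ scales a quartic by $\lambda^4$. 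The special cases $I=0$ and $J=0$ correctly reduce to matching the invariant of $x(x^3+y^3)$ (namely $[0:1]$) and of $x^4+y^4$ (namely $[1:0]$). The only external input is the classical fact that $[I^3:J^2]$ (equivalently the $j$-invariant of the four-point configuration) is a complete $\GL_2(\bC)$-invariant for nondegenerate quartics, which you correctly trace back to the cross-ratio and which is exactly the content of the Cayley/Olver reference; an alternative, slightly more hands-on route is the one you sketch in your last paragraph, namely exhibiting a M\"obius map carrying the four roots to a symmetric set $\{r,-r,r^{-1},-r^{-1}\}$, which produces the normal form $F_A$ directly without invoking surjectivity of the invariant map. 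Either way, no gaps.
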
 
We shall next require the following lemma, which follows from simple group theory:
\begin{lemma} \label{conj lem} Let $F$ be a binary form with complex coefficients. Then for $T \in \GL_2(\bC)$, we have
\[\Aut_\bC F_T = T^{-1} (\Aut_\bC F) T\]
and likewise
\[\Aut_\bC^\ast F_T = T^{-1} (\Aut_\bC^\ast F) T.\]
\end{lemma}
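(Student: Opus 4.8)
The plan is to reduce everything to the single identity governing how the substitution action (\ref{sub action}) composes. First I would record that, writing $\Bv$ for the column vector with entries $x,y$ so that $F_T(x,y) = F(T\Bv)$ in the notation of (\ref{sub action}), for any $S,T \in \GL_2(\bC)$ one has
\[
(F_S)_T(\Bv) = F_S(T\Bv) = F\bigl(S(T\Bv)\bigr) = F\bigl((ST)\Bv\bigr) = F_{ST}(\Bv),
\]
so that $(F_S)_T = F_{ST}$; equivalently, $T \mapsto \bigl(G \mapsto G_T\bigr)$ is a \emph{right} action of $\GL_2(\bC)$ on binary forms. I would also note that substitution is linear in the form, so $(\lambda G)_S = \lambda (G_S)$ for every scalar $\lambda$, and that $F_I = F$ for the identity matrix $I$.

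Granting this, the lemma is immediate. For $U \in \GL_2(\bC)$,
\[
U \in \Aut_\bC F_T \iff (F_T)_U = F_T \iff F_{TU} = F_T.
\]
Applying the operation $(\,\cdot\,)_{T^{-1}}$ to both sides --- a bijection, being the action of the invertible matrix $T^{-1}$ --- and using the composition law together with $TT^{-1} = I$ and $F_I = F$, the last equation is equivalent to $F_{TUT^{-1}} = F$, i.e.\ to $TUT^{-1} \in \Aut_\bC F$, i.e.\ to $U \in T^{-1}(\Aut_\bC F)T$. This proves the first identity. For the second, the identical chain of equivalences applies with ``$F_{TU} = F_T$'' replaced throughout by ``$F_{TU} = \lambda F_T$ for some $\lambda \in \bC^\times$'' and correspondingly ``$F_{TUT^{-1}} = F$'' by ``$F_{TUT^{-1}} = \lambda F$ for some $\lambda \in \bC^\times$''; the passage between these is harmless because $(\,\cdot\,)_{T^{-1}}$ commutes with scalar multiplication by linearity. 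Here one invokes the characterization of $\Aut_\bC^\ast F$ as $\{S \in \PGL_2(\bC) : F_S = \lambda F \text{ for some } \lambda \in \bC^\times\}$ recorded in the previous section (working with matrix representatives in $\GL_2(\bC)$ and passing to $\PGL_2(\bC)$ at the end).

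There is essentially no genuine obstacle; the one thing to get right is the bookkeeping in the first step, so that the conjugation emerges as $T^{-1}(\,\cdot\,)T$ rather than $T(\,\cdot\,)T^{-1}$. This choice is forced by the fact that the substitution action is a \emph{right} action --- stabilizers of $F_T$ transform by conjugation by $T$ on the appropriate side --- which is in turn forced by the convention $F_T(\Bv) = F(T\Bv)$ in (\ref{sub action}). Before committing to the conjugation I would verify the composition law $(F_S)_T = F_{ST}$ explicitly on a monomial, to be sure no stray transpose or inverse has slipped in.
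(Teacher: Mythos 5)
Your proof is correct and is precisely the ``simple group theory'' argument the paper alludes to: the paper states Lemma \ref{conj lem} without proof, and your verification via the composition law $(F_S)_T = F_{ST}$ and the resulting conjugation of stabilizers (with the side of the conjugation correctly dictated by the right-action convention in (\ref{sub action})) fills in exactly the intended details, including the harmless extension to the projective case via the scalar $\lambda$.
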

This allows us to prove the following:
\begin{lemma} \label{3 and 4} Let $F$ be a binary quartic form with complex coefficients and non-zero discriminant. Then $\Aut_\bC^\ast F$ contains an element of order $3$ if and only if $I(F) = 0$, and contains an element of order $4$ if and only if $J(F) = 0$. 
\end{lemma}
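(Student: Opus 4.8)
The plan is to reduce everything to the two normal forms supplied by Lemma \ref{quartic conj}, using the conjugation-invariance of the automorphism groups from Lemma \ref{conj lem} together with the $\GL_2(\bC)$-invariance of $I$ and $J$. First I would record the elementary fact that having an element of order $3$ (resp. $4$) in $\Aut_\bC^\ast F$ is a $\GL_2(\bC)$-conjugacy invariant of $F$: indeed $\Aut_\bC^\ast F_T = T^{-1}(\Aut_\bC^\ast F)T$ by Lemma \ref{conj lem}, and conjugation is a group isomorphism, so it preserves the multiset of element orders. Since $I$ and $J$ are (relative) invariants, the conditions $I(F)=0$ and $J(F)=0$ are likewise unchanged under the $\GL_2(\bC)$-action (the leading coefficient only scales, so vanishing is preserved). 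Hence it suffices to verify the claim for one representative in each relevant $\GL_2(\bC)$-orbit.

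Next I would split into the two directions and the two representatives. For the ``order $3$ $\iff I(F)=0$'' equivalence: if $I(F)=0$, Lemma \ref{quartic conj} says $F$ is $\GL_2(\bC)$-equivalent to $F_0 = x(x^3+y^3) = x^4 + xy^3$, so I compute $\Aut_\bC^\ast F_0$ and exhibit an explicit order-$3$ element. The roots of $F_0(x,1) = x^4+x$ in $\bP^1(\bC)$ are $0$ and the three cube roots of $-1$; the Möbius map $\theta \mapsto \omega\theta$ (with $\omega$ a primitive cube root of unity) fixes $0$ and cyclically permutes the cube roots of $-1$, hence lies in $\Aut_\bC^\ast F_0$ and has order $3$. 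Conversely, if $\Aut_\bC^\ast F$ contains an order-$3$ element, then by Proposition \ref{PGL auto prop} (or by a direct group-theoretic argument: a subgroup of $\S_4$ containing a $3$-cycle and lying in one of the possible shapes $\C_2\times\C_2$, $\D_4$, $\A_4$ must be $\A_4$, since neither $\C_2\times\C_2$ nor $\D_4$ has order divisible by $3$), $\Aut_\bC^\ast F \cong \A_4$; and Proposition \ref{PGL auto prop} identifies that case with $I(F)=0$. The treatment of ``order $4$ $\iff J(F)=0$'' is entirely parallel, using the representative $F_1 = x^4+y^4$: its projective roots are the four primitive eighth roots of unity, and $\theta \mapsto i\theta$ is an order-$4$ Möbius transformation permuting them, so it lies in $\Aut_\bC^\ast F_1$; conversely an order-$4$ element forces $\Aut_\bC^\ast F \cong \D_4$ (the only one of the three groups of order divisible by $4$ containing an element of order $4$, as $\A_4$ has no element of order $4$ and $\C_2\times\C_2$ has exponent $2$), which Proposition \ref{PGL auto prop} pins down as the locus $J(F)=0$.

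I should be slightly careful about one cross-dependency: Proposition \ref{PGL auto prop} is stated before this lemma, so I am entitled to use it, but part of the \emph{point} of Lemma \ref{3 and 4} is presumably to feed into the proof of Proposition \ref{PGL auto prop} or into later arguments, so to keep the logic clean I would prefer to give the converse directions via the purely group-theoretic observation above (which only needs the list of \emph{possible} isomorphism types $\C_2\times\C_2$, $\D_4$, $\A_4$, not the $I,J$-characterization) combined with the forward directions just proved, rather than quoting the $I,J$ part of Proposition \ref{PGL auto prop} wholesale. Concretely: the forward directions show $I(F)=0 \Rightarrow$ order $3$ exists and $J(F)=0 \Rightarrow$ order $4$ exists; the group theory shows order $3$ exists $\Rightarrow \Aut^\ast \cong \A_4$ and order $4$ exists $\Rightarrow \Aut^\ast \cong \D_4$, and these two isomorphism types are mutually exclusive; combined with the fact (from the normal form count, or from $I,J$ not both zero on a discriminant-nonzero generic form) that the three loci $\{I=0\}$, $\{J=0\}$, $\{IJ\neq 0\}$ partition the space, one closes the loop.

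The main obstacle I anticipate is purely bookkeeping: correctly computing the Möbius automorphism group of each normal form and checking that the exhibited element genuinely has the claimed order in $\PGL_2(\bC)$ (not merely in $\GL_2(\bC)$ up to the scalar kernel $\{\mu_4 I\}$). For $\theta\mapsto i\theta$ on $x^4+y^4$ one must confirm it is not the identity in $\PGL_2$ and that its square $\theta \mapsto -\theta$ is also nontrivial — both clear, but worth stating. A secondary subtlety is making sure the invariants really do only scale, not vanish-or-unvanish erratically, under substitution; this is standard (for $T\in\GL_2(\bC)$ one has $I(F_T) = (\det T)^4 I(F)$ and $J(F_T) = (\det T)^6 J(F)$), and I would simply cite it. With those two points handled, the rest is immediate from the lemmas already in place.
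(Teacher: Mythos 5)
Your ``if'' direction is correct and matches the paper: reduce to the normal forms $x(x^3+y^3)$ and $x^4+y^4$ of Lemma \ref{quartic conj} via Lemma \ref{conj lem} and the invariance of the vanishing of $I$ and $J$, then exhibit the explicit order-$3$ and order-$4$ M\"obius maps. The genuine gap is in the ``only if'' direction. Quoting Proposition \ref{PGL auto prop} is circular: in the paper that proposition is \emph{proved from} Lemma \ref{3 and 4} (both the exclusion of $\S_4$ and the identification of the $\A_4$ and $\D_4$ cases with $I=0$ and $J=0$ are deduced from this lemma). Your proposed repair does not escape the circle: the step ``a subgroup of $\S_4$ containing a $3$-cycle and lying in one of the possible shapes $\C_2\times\C_2$, $\D_4$, $\A_4$ must be $\A_4$'' presupposes the list of possible shapes, i.e.\ that $\S_4$ has already been ruled out, which is again part of Proposition \ref{PGL auto prop}. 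Moreover, even granting ``order $3$ $\Rightarrow\A_4$'' and ``order $4$ $\Rightarrow\D_4$'', your closing argument only produces a contradiction from ``order $3$ exists and $J(F)=0$''; in the remaining case $I(F)J(F)\neq 0$ you would need to know independently that $\Aut_\bC^\ast F\cong\C_2\times\C_2$, which is once more the content of the proposition you are trying not to use. So the loop does not close.

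The paper's converse is direct and self-contained, and this is the idea your proposal is missing: normalize the \emph{automorphism} rather than the form. An element $T$ of order $3$ (resp.\ $4$) in $\PGL_2(\bC)$ is diagonalizable with eigenvalue ratio a primitive third (resp.\ fourth) root of unity, so by Lemma \ref{conj lem} one may assume $T=\mathrm{diag}(1,\mu_3)$ (resp.\ $\mathrm{diag}(1,\mu_4)$). The relation $F_T=\lambda F$ then forces the indices $i$ with $a_i\neq 0$ to lie in a single residue class modulo $3$ (resp.\ modulo $4$), and nonvanishing of the discriminant leaves only $F$ proportional to $x(x^3+cy^3)$ up to swapping $x$ and $y$ (resp.\ only $F=a_4x^4+a_0y^4$); one then reads off $I(F)=0$ from (\ref{I}) (resp.\ $J(F)=0$ from (\ref{J})) directly. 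If you keep your structure, you must replace the converse step with such a computation, or with some other argument that is independent of Proposition \ref{PGL auto prop}.
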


\begin{proof} Suppose that $\Aut_\bC^\ast F$ contains an element $T$ of order 3. By Lemma \ref{conj lem}, we may assume that 
\[T = \begin{pmatrix} 1 & 0 \\ 0 & \mu_3 \end{pmatrix},\]
where $\mu_3$ is a primitive third root of unity. It then follows that $F$ is of the shape
\[F(x,y) = Ax(x^3 + y^3)\]
for some complex number $A$, and from (\ref{I}) one immediately sees that $I(F)  = 0$. A similar argument shows that if $\Aut_\bC^\ast F$ contains an element of order 4, then $F$ is equivalent to $x^4 + y^4$ and thus $J(F) = 0$. \\ \\
For the converse, if $\Delta(F) \ne 0$ then Lemma \ref{quartic conj} implies that $\Aut_\bC^\ast F$ contains elements of order 3 and 4 when $I(F) = 0$ and $J(F)  =0$, respectively. 
\end{proof}

Now we may give a proof of Proposition \ref{PGL auto prop}. 

\begin{proof}[Proof of Proposition \ref{PGL auto prop}] Let $F$ be a binary cubic form with complex coefficients and non-zero discriminant. Then, by Lemma \ref{cubic conj}, it follows that $F$ is $\GL_2(\bC)$-equivalent to $F_0 = xy(x+y)$. A quick calculation reveals that
\[\Aut_\bC^\ast F_0 = \left\{I_{2 \times 2}, \begin{pmatrix} 0 & 1 \\ -1 & -1 \end{pmatrix}, \begin{pmatrix} -1 & -1 \\ 1 & 0 \end{pmatrix}, \begin{pmatrix} 0 & 1 \\ 1 & 0 \end{pmatrix}, \begin{pmatrix} 1 & 0 \\ -1 & -1 \end{pmatrix}, \begin{pmatrix} -1 & -1 \\ 0 & 1 \end{pmatrix}\right\}.\]
It is routine to check that $\Aut_\bC^\ast F_0$ is isomorphic to $\S_3$, and so Lemma \ref{conj lem} shows that $\Aut_\bC^\ast F \cong \S_3$, as desired. \\ \\
Now let $F$ be a binary quartic form with non-zero discriminant. By Lemma \ref{quartic conj} it follows that $F$ is $\GL_2(\bC)$-equivalent to $F_1 = x^4 + Ax^2 y^2 + y^4$ for some complex number $A$. It is easily checked that $\Aut_\bC^\ast F_1$ contains the set
\[\left\{I_{2 \times 2}, \begin{pmatrix} 1 & 0 \\ 0 & -1 \end{pmatrix}, \begin{pmatrix} 0 & 1 \\ -1 & 0 \end{pmatrix}, \begin{pmatrix} 0 & 1 \\ 1 & 0 \end{pmatrix}\right\},\]
which is a set of representatives of a group isomorphic to $\C_2 \times \C_2$ in $\PGL_2(\bC)$. \\ \\
Note that $\Aut_\bC^\ast (F)$ is a subgroup of the symmetric group $\S_4$. By Lemma \ref{3 and 4}, $\Aut_\bC^\ast (F)$ contains an element of order $3$ when $I(F) = 0$. The only subgroups of $\S_4$ containing both $\C_2 \times \C_2$ and an element of order 3 are the alternating group $\A_4$ and $\S_4$ itself. By Lemma \ref{3 and 4} we see that $\Aut_\bC^\ast F$ cannot equal $\S_4$, since otherwise $\Aut_\bC^\ast F$ contains an element of order 4, which implies that $J(F) = 0$; and since 
\[\Delta(F) = \frac{4I(F)^3 - J(F)^2}{27},\]
this contradicts the assumption that $F$ has non-zero discriminant. Similarly, the only subgroups of $\S_4$ which contain $\C_2 \times \C_2$ and an element of order 4 are $\D_4$ and $\S_4$ itself, and the latter contains an element of order 3; hence cannot be isomorphic to $\Aut_\bC^\ast F$ for $F$ with non-zero discriminant by Lemma \ref{3 and 4}. This completes the proof of Proposition \ref{PGL auto prop}. \end{proof}

\section{Binary cubic forms}
\label{cubics}

Suppose
\[F(x,y) = b_3 x^3 + b_2 x^2 y + b_1 xy^2 + b_0 y^3\]
is a binary cubic form with integer coefficients and non-zero discriminant. We shall assume, after applying a $\GL_2(\bZ)$-action if necessary, that $b_3 \ne 0$. It is known that there is a single \emph{rational} quadratic covariant of $F$, given by the Hessian $q_F(x,y) = Ax^2 + Bxy + Cy^2$, where $A,B,C$ are as below:
\begin{equation} \label{hess cov coef} A = b_2^2 - 3b_3 b_1, B = b_2 b_1 - 9b_3 b_0, C = b_1^2 - 3b_2 b_0. \end{equation} 
Put $D = B^2 - 4AC$. It is known that $D = -3 \Delta(F)$. \\

In his thesis, G.~Julia identified three additional \emph{irrational}, or algebraic, quadratic covariants which depend on the roots $\theta_1, \theta_2, \theta_3$ of $F(x,1)$ in \cite{Jul}. We shall write the Julia covariant with respect to a root $\theta$ of $F(x,1)$ as follows:
\begin{equation} \label{Julia cov} J_\theta(x,y) = h_2 x^2 + h_1 xy + h_0 y^2,\end{equation}
where
\begin{align*} & h_2 = 9 b_3^2 \theta^2 + 6 b_3 b_2 \theta + 6 b_3 b_1 - b_2^2, \\
& h_1 = 6 b_3 b_2 \theta^2 + 6(b_2^2 - b_3 b_1)\theta + 2 b_2 b_1, \\
& h_0 = 3 b_3 b_1 \theta^2 + 3(b_2 b_1 - 3 b_3 b_0) \theta + 2 b_1^2 - 3 b_2 b_0.
\end{align*}
Cremona showed that $h_2, h_1, h_0$ are algebraic integers in \cite{Cre}, in the discussion immediately following equation (11). Thus, whenever $\theta$ is rational, $J_\theta$ has rational integral coefficients. \\ \\
For a binary quadratic form $f(x,y) = ax^2 + bxy + cy^2$ with complex coefficients, define
\begin{equation} \label{mat 1} \M_f = \begin{pmatrix} b & 2c \\ -2a & -b \end{pmatrix} \end{equation}
and
\begin{equation} \label{Hooley matrix} \N_f = \frac{1}{2 \Delta(f)} \begin{pmatrix} b \sqrt{-3 \Delta(f)} - \Delta(f) & 2c \sqrt{-3 \Delta(f)} \\ -2a \sqrt{-3 \Delta(f)} & -b \sqrt{-3 \Delta(f)} - \Delta(f) \end{pmatrix}. \end{equation}
Here the square root of a complex number is the principal square root with non-negative real part and positive imaginary part if the real part vanishes. \\ \\
Define
\begin{equation} \label{Ttheta} \T_\theta = \frac{-1}{6 \Delta(F)} \M_{J_\theta} \M_{q_F}. \end{equation}
When $\theta$ is rational, the matrix $6\Delta(F) \T_\theta$ has integer entries since $\Delta(q_F) = -3\Delta(F)$ and $\Delta(J_\theta) = 12 \Delta(F)$. \\ 

We have the following theorem:

\begin{theorem} \label{BCFMT} Let $F$ be a binary cubic form with integer coefficients and non-zero discriminant. Then: 
\begin{enumerate}
\item $\Aut F = \{I_{2 \times 2}\}$ if and only if $F$ is irreducible and $\Delta(F)$ is not a square.
\item $\Aut F$ is generated  by $\N_{q_F} \in \GL_2(\bQ)$ and is isomorphic to $\C_3$ if and only if $F$ is irreducible and $\Delta(F)$ is a square. 
\item $\Aut F$ is generated by $\T_\theta$ for the unique rational root $\theta$ of $F(x,1)$ and is isomorphic to $\C_2$ if and only if $F$ has exactly one rational linear factor over $\bQ$, corresponding to the root $\theta$.
\item 
\[\Aut F = \{I_{2 \times 2}, \N_{q_F}, \N_{q_F}^2, \T_{\theta_1}, \T_{\theta_2}, \T_{\theta_3}\} \cong \D_3\]
if and only if $F$ splits completely over $\bQ$. 
\end{enumerate}
\end{theorem}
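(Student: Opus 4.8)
The plan is to reduce everything to the permutation action on the three roots $\theta_1,\theta_2,\theta_3$ of $F(x,1)$, extract sharp upper bounds on $\Aut F$ from the Galois action, and then supply matching lower bounds using the explicit covariant matrices $\N_{q_F}$ and $\T_\theta$. First I would record that the natural map $\Aut_\bQ F \to \Aut_\bC^\ast F$ is \emph{injective}: its kernel is $\Aut_\bQ F \cap \{\mu_3 I_{2\times2}\} = \{I_{2\times2}\}$, since no nontrivial cube root of unity is rational. By Proposition \ref{PGL auto prop}, $\Aut_\bC^\ast F \cong \S_3$ via its action on $\B_F$; hence $\Aut F$ embeds into $\S_3$ and $|\Aut F|$ divides $6$.

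Next I would impose the Galois constraint. Let $G \subseteq \S_3$ be the image of $\Gal(\ol{\bQ}/\bQ)$ acting on $\{\theta_1,\theta_2,\theta_3\}$. Galois descent for $\PGL_2$ identifies $\Aut_\bQ^\ast F$ with $(\Aut_{\ol{\bQ}}^\ast F)^{\Gal}$, which under the identification with $\S_3$ is exactly the centralizer $C_{\S_3}(G)$; hence $\Aut F \subseteq C_{\S_3}(G)$. The factorization dichotomy (using $\Delta(F)\ne 0$ to exclude repeated roots, together with the standard fact that $G \subseteq \A_3$ iff $\Delta(F)$ is a square) gives: $G=\S_3$ iff $F$ is irreducible with $\Delta(F)$ a non-square; $G=\C_3$ iff $F$ is irreducible with $\Delta(F)$ a square; $G=\C_2$ iff $F$ has exactly one rational root; and $G=\{I_{2\times2}\}$ iff $F$ splits over $\bQ$. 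Since $C_{\S_3}(\S_3)=\{1\}$, $C_{\S_3}(\C_3)=\C_3$, $C_{\S_3}(\langle(12)\rangle)=\langle(12)\rangle$, and $C_{\S_3}(\{1\})=\S_3$, this already proves (1), and yields $\Aut F\subseteq\C_3$ in case (2), $\Aut F\subseteq\C_2$ in case (3), and $\Aut F\subseteq\S_3$ in case (4).

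The core of the proof is then to verify that the listed matrices are genuine rational automorphisms. For $\N_{q_F}$: since $\Delta(q_F)=-3\Delta(F)$ one has $\sqrt{-3\Delta(q_F)}=3\sqrt{\Delta(F)}$, so $\N_{q_F}\in\GL_2(\bQ)$ exactly when $\Delta(F)$ is a square; a direct computation gives $\Tr\N_{q_F}=-1$ and $\det\N_{q_F}=1$, so $\N_{q_F}^3=I_{2\times2}$ and $\N_{q_F}\ne I_{2\times2}$; and one checks $F_{\N_{q_F}}=F$ (either by substitution using (\ref{hess cov coef}), or by reducing $F$ to the normal form of Lemma \ref{cubic conj} via Lemma \ref{conj lem} and using covariance of the Hessian). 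For $\T_\theta$ with $\theta\in\bQ$: the coefficients of $J_\theta$ and $q_F$ are rational, so $\T_\theta\in\GL_2(\bQ)$; using $\M_f^2=\Delta(f)I_{2\times2}$ (Cayley--Hamilton, since $\Tr\M_f=0$ and $\det\M_f=-\Delta(f)$) together with the fact that $\M_{J_\theta}$ and $\M_{q_F}$ anticommute (equivalently $\Tr(\M_{J_\theta}\M_{q_F})=0$), one gets $\T_\theta^2=I_{2\times2}$, $\T_\theta\ne I_{2\times2}$, and that $\T_\theta$ fixes $[\theta:1]$, so it realizes the transposition fixing $\theta$; in particular $F_{\T_\theta}=F$. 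Assembling: in (2), $\N_{q_F}\in\Aut F$ is a nontrivial element of $\C_3$, hence generates it; in (3), $\T_\theta\in\Aut F$ is the unique nontrivial element of $\C_2$, hence generates it; in (4), $\Delta(F)$ and each $\theta_i$ are rational, so $I_{2\times2},\N_{q_F},\N_{q_F}^2,\T_{\theta_1},\T_{\theta_2},\T_{\theta_3}$ all lie in $\Aut F$ and are pairwise distinct (the $\T_{\theta_i}$ fix different points), so $\Aut F=\S_3\cong\D_3$. The converse directions of all four statements follow at once, since $\{1\},\C_2,\C_3,\D_3$ are pairwise non-isomorphic and the four hypotheses on $F$ are mutually exclusive and exhaustive.

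The main obstacle is the explicit identity $F_{\N_{q_F}}=F$, and likewise $F_{\T_\theta}=F$: one must show the covariant matrices, with their precise normalizing constants, fix $F$ on the nose rather than merely up to a scalar. I expect the cleanest route is to verify this after moving $F$ to a normal form — $xy(x+y)$ in the split case, and a monic cubic with a convenient root configuration otherwise — and transporting back via Lemma \ref{conj lem} and the transformation laws for $q_F$ and $J_\theta$; the delicate point there is tracking the square root in the definition of $\N_f$ under the change of variables. A brute-force substitution using (\ref{hess cov coef}) and (\ref{Julia cov}) also works but is less illuminating.
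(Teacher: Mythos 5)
Your proposal is correct, and it reorganizes the argument in a way that differs meaningfully from the paper's. The paper first proves (Proposition \ref{Julia gen}) that $\{I_{2\times 2}, \T_{\theta_1},\T_{\theta_2},\T_{\theta_3},\N_{q_F},\N_{q_F}^2\}$ is a \emph{complete} set of representatives of $\Aut_\bC^\ast F$ — verified on the single normal form $2x^3+3x^2y+xy^2$ and transported by covariance — and then obtains the theorem purely by deciding which of these six matrices lie in $\GL_2(\bQ)$: $\T_\theta$ is rational iff $\theta$ is (read off from the formula \eqref{Julia cov} for the Julia covariant), and $\N_{q_F}$ is rational iff $-3\Delta(q_F)=9\Delta(F)$ is a square. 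You instead get the upper bound abstractly, from the observation that the image of $\Aut F$ in $\S_3$ must centralize the Galois image $G$, and only then use the explicit matrices for the matching lower bound. The two routes need the same computational core — that $\T_\theta$ and $\N_{q_F}$ fix $F$ on the nose (not merely up to scalar), which both you and the paper settle by normal-form reduction plus covariance — but your centralizer argument buys a cleaner, inspection-free proof of the ``only if'' halves (e.g.\ that an irreducible $F$ with non-square discriminant has no nontrivial rational automorphism), whereas the paper must argue case by case that each irrational matrix really fails to be rational. Your trace/determinant computations ($\Tr\N_{q_F}=-1$, $\det\N_{q_F}=1$, hence order $3$; $\M_f^2=\Delta(f)I_{2\times2}$ and anticommutativity giving $\T_\theta^2=I_{2\times2}$) are also a nice structural replacement for parts of the paper's direct verification. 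The one point to make sure you actually carry out is the normalization check $F_{\T_\theta}=F$ and $F_{\N_{q_F}}=F$ (as opposed to $F_{T}=-F$); you correctly flag this as the main obstacle, and the involution relation $\T_\theta^2=I_{2\times2}$ alone only pins the scalar down to $\pm 1$, so the explicit verification on a normal form (as in the paper) cannot be skipped.
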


We shall prove the following result, from which Theorem \ref{BCFMT} will follow:

\begin{proposition} \label{Julia gen} Let $F$ be a binary cubic form with complex coefficients and non-zero discriminant. Suppose that the $x^3$-coefficient of $F$ is non-zero and let $\theta_1, \theta_2, \theta_3$ be the three distinct roots of $F(x,1)$. Then a set of representatives of $\Aut_\bC^\ast (F)$ in $\GL_2(\bC)$ is given by 
\[\left\{I_{2 \times 2}, \T_{\theta_1}, \T_{\theta_2}, \T_{\theta_3}, \N_{q_F}, \N_{q_F}^2 \right\}.\]
\end{proposition}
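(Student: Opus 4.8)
The plan is to verify directly that each listed matrix is a $\GL_2(\bC)$-automorphism of $F$, and then count to conclude the list is a complete set of coset representatives. By Proposition~\ref{PGL auto prop}, $\Aut_\bC^\ast(F) \cong \S_3$ has exactly six elements, and the kernel of $\Aut_\bC(F) \to \Aut_\bC^\ast(F)$ consists of the scalar matrices $\mu_3 I_{2\times 2}$; hence it suffices to exhibit six elements of $\GL_2(\bC)$ which fix $F$ up to a scalar and which lie in six distinct cosets of the scalars (equivalently, six distinct elements of $\PGL_2(\bC)$). The identity contributes the trivial element, so the real content is the other five.

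First I would treat $\N_{q_F}$. The matrix $\M_f$ in (\ref{mat 1}) is, up to scaling, the standard matrix whose Mobius action on $\bP^1(\bC)$ swaps the two roots of $f$; more precisely, $\M_f$ acts on $\bP^1$ by fixing the two roots of $f$? — one must check: $\M_f = \left(\begin{smallmatrix} b & 2c \\ -2a & -b \end{smallmatrix}\right)$ has trace $0$, hence its Mobius action is an involution, and a direct computation shows its fixed points are exactly the roots of $ax^2+bxy+cy^2$. Applying this to $q_F$, whose roots are the two non-real points fixed by the order-$3$ rotation permuting $\theta_1,\theta_2,\theta_3$, one sees that a suitable linear combination of $I$ and $\M_{q_F}$ gives an order-$3$ element of $\PGL_2$. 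The explicit form (\ref{Hooley matrix}) is precisely that combination: one checks $\N_{q_F} = \frac{1}{2\Delta(q_F)}\big(\sqrt{-3\Delta(q_F)}\,\M_{q_F} - \Delta(q_F) I\big)$ wait—here $\Delta(q_F) = -3\Delta(F)$, so the scalars match (\ref{Hooley matrix}) after substituting—and that $\N_{q_F}^3$ is a scalar. Since $\N_{q_F}$ has order $3$ in $\PGL_2$ it fixes $\B_F$ setwise (the only order-$3$ subgroup of $\S_3$, up to conjugacy, is generated by the $3$-cycle, and an order-$3$ element of $\PGL_2$ fixing the circumcircle-type configuration must cycle the three roots), so $\N_{q_F}, \N_{q_F}^2 \in \Aut_\bC^\ast(F)$ and are distinct from $I$ and from each other.

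Next the three matrices $\T_\theta$. Each $\T_\theta$ should be the involution in $\PGL_2$ corresponding to the transposition in $\S_3$ that fixes $\theta$ and swaps the other two roots. To see this, note $\M_{J_\theta}$ is an involution fixing the two roots of the Julia quadratic $J_\theta$, and $\M_{q_F}$ is the involution fixing the two roots of $q_F$; the product of two involutions is generically again understood via its fixed points, but the cleaner route is: reduce to the model $F_0 = xy(x+y)$ using Lemma~\ref{conj lem} and Lemma~\ref{cubic conj}, compute $q_{F_0}$, $J_\theta$ for $\theta \in \{0,\infty,-1\}$ explicitly, and check that $\T_\theta$ defined by (\ref{Ttheta}) reproduces the three transposition-matrices appearing in the explicit list for $\Aut_\bC^\ast F_0$ in the proof of Proposition~\ref{PGL auto prop}. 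Because (\ref{Ttheta}) and the covariants $q_F, J_\theta$ are equivariant under the substitution action (the Hessian and Julia covariants are covariants, so $q_{F_T} = q_F \circ T$ up to scalar, and similarly $J_{\theta'}$ transforms correctly when $\theta' = T^{-1}\theta$), the identity verified for $F_0$ transports to arbitrary $F$. Then one checks the three $\T_{\theta_i}$ are the three distinct transpositions, disjoint from $\{I, \N_{q_F}, \N_{q_F}^2\}$, giving all six cosets.

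The main obstacle I expect is bookkeeping with the normalizations and square roots: verifying that the scalar factors $-1/(6\Delta(F))$ in (\ref{Ttheta}) and the $\frac{1}{2\Delta(f)}\sqrt{-3\Delta(f)}$ in (\ref{Hooley matrix}) are exactly right so that the matrices literally fix $F$ (not merely up to scalar) — i.e. promoting membership in $\Aut_\bC^\ast$ to the precise claim about representatives — and confirming $\Delta(J_\theta) = 12\Delta(F)$, $\Delta(q_F) = -3\Delta(F)$ as used. This is essentially a determinant/discriminant computation: $F_{\T_\theta} = F$ will follow once one knows $\T_\theta \in \Aut_\bC^\ast F$ and $\det \T_\theta$ together with the leading coefficient behaviour pins down the scalar to be $1$; the equivariance argument above makes the $\PGL_2$ part conceptual, leaving only this scalar normalization to be done by hand on the model form $F_0$ (and on $x^3+y^3$-type forms if one wants the reducible cases), which then propagates by covariance.
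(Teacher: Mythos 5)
Your proposal follows essentially the same route as the paper: reduce to a single model form via Lemma \ref{cubic conj} and the covariance of $q_F$ and the Julia covariants, then verify the six matrices by explicit computation (your extra fixed-point analysis of the trace-zero matrices $\M_f$ and the order-$3$ check for $\N_{q_F}$ is a nice conceptual gloss but not needed once the model computation is done). One repair: your chosen model $F_0 = xy(x+y)$ has vanishing $x^3$-coefficient, so one root of $F_0(x,1)$ is at infinity and the formula (\ref{Julia cov}) for $J_\theta$ does not apply to it; the paper avoids this by computing on $F(x,y) = 2x^3 + 3x^2y + xy^2$, and you should likewise pick a model with non-zero leading coefficient.
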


\begin{proof} By Lemma \ref{cubic conj} and the observation that $q_F$ and $\J_{\theta_i}, i = 1,2,3$ are covariants of $F$, it suffices to prove Proposition \ref{Julia gen} for any binary cubic form with non-zero discriminant and non-zero leading coefficient. We choose
\[F(x,y) = 2x^3 + 3x^2 y + xy^2.\]
The roots of $F(x,1)$ are then $\theta_1 = 0, \theta_2 = -1, \theta_3 = -1/2$. Computing the Julia covariants we then see that they are given by
\[J_{1}(F) = 3x^2 + 6xy + 2y^2, J_{2}(F) = 3x^2 - y^2, J_{3}(F) = -6x^2 - 6xy - y^2, \]
whence
\[\M_{\J_1} = \begin{pmatrix} 6 & 4 \\ -6 & -6 \end{pmatrix}, \M_{\J_2} = \begin{pmatrix} 0 & -2 \\ -6 & 0 \end{pmatrix}, \M_{\J_3} = \begin{pmatrix} -6 & -2 \\ 12 & 6 \end{pmatrix}.\]
The Hessian covariant of $F$ is given by
\[q_F(x,y) = 3x^2 + 3xy + y^2 \]
and 
\[\M_{q_F} = \begin{pmatrix} 3 & 2 \\ -6 & -3 \end{pmatrix}.\]
It thus follows that
\[\T_1 = \begin{pmatrix} 1 & 0 \\ -3 & -1 \end{pmatrix}, \T_2 = \begin{pmatrix} -2 & -1 \\ 3 & 2 \end{pmatrix}, \T_3 = \begin{pmatrix} 1 & 1 \\ 0 & -1 \end{pmatrix}.\]
A quick calculation then shows that $\T_1, \T_2, \T_3$ fix $F$ by substitution. Similarly, one can check that
\[\N_{q_F} = \begin{pmatrix} -2 & -1 \\ 3 & 1 \end{pmatrix}\]
and its square both fix $F$ via substitution. This completes the proof. 
\end{proof}
We remark that the requirement for the $x^3$-coefficient of $F$ be non-zero is merely in place because of how the Julia covariants are defined. Indeed the statement holds for all binary cubic forms with non-zero discriminant, because the Julia covariants are covariants. \\

We may now prove Theorem \ref{BCFMT}. 

\begin{proof}[Proof of Theorem \ref{BCFMT}] Put $\Aut^\ast F$ for the subset of $\Aut_\bC^\ast F$ defined over the rationals. We note that the natural map
\[\Aut^\ast F \rightarrow \Aut F\]
is an isomorphism, since $\bQ$ does not contain any non-trivial cube roots of unity. Thus, the elements of $\Aut F$ must come from the set $\{I_{2 \times 2}, \T_{\theta_1}, \T_{\theta_2}, \T_{\theta_3}, \N_{q_F}, \N_{q_F}^2\}$. \\ \\
Observe that $\M_{q_F} \in \GL_2(\bQ)$ for all binary cubic forms with integer coefficients. From here it is plain that $\T_\theta$ can be in $\GL_2(\bQ)$ only if $\M_{\J_\theta}$ has rational coefficients, and from (\ref{Julia cov}) we see that this can only occur when $\theta$ is a rational. Therefore if $F$ is irreducible, then $\T_\theta$ does not lie in $\Aut F$. By examining the explicit formula in (\ref{Hooley matrix}), it follows that $\N_{q_F} \in \GL_2(\bQ)$ only when $-3\Delta(q_F)$ is a square, which is equivalent to $\Delta(F)$ being a square. Thus, when $\Delta(F)$ is not a square and $F$ is irreducible, $\Aut F$ contains just the identity matrix. \\ \\
When $F$ is reducible, say $\theta$ is a rational root of $F(x,1)$, we see that $\T_\theta$ does indeed lie in $\GL_2(\bQ)$. An elementary calculation shows that if $F(x,1)$ has a unique rational root then $\Delta(F)$ is not a square, and thus $\N_{q_F} \not \in \GL_2(\bQ)$. Therefore, $\T_\theta$ is the only non-trivial element of $\Aut F$. Finally, if $F(x,1)$ has three rational roots, it is obvious from the definition of the discriminant that $\Delta(F)$ is a square and hence $\T_{\theta_i}, i = 1,2,3$ and $\N_{q_F}, \N_{q_F}^2$ are all rational. 
\end{proof}

\section{Binary quartic forms}
\label{quartics}

Suppose
\[F(x,y) = a_4 x^4 + a_3 x^3 y + a_2 x^2 y^2 + a_1 xy^3 + a_0 y^4\]
is a binary quartic form with integer coefficients and non-zero discriminant. For a binary quadratic form $f(x,y) = ax^2 + bxy + cy^2$ with real coefficients and non-zero discriminant, put
\begin{equation} \label{U2} U_f = \frac{1}{\sqrt{|\Delta(f)|}} \M_{f}. \end{equation}
We say that a binary quadratic form $f$ with complex coefficients is \emph{rationally good} if it is proportional over $\bC$ to a quadratic form $g$ with integer coefficients and $|\Delta(g)|$ the square of an integer. Otherwise, we say that $f$ is \emph{rationally bad}. \\ \\
Binary quartic forms have a degree 6 covariant given by
\begin{align} \label{sextic cov} 
F_6(x,y) & = (a_3^3 + 8a_4^2 a_1 - 4a_4 a_3 a_2) x^6 + 2(16a_4^2 a_0 + 2 a_4 a_3 a_1 - 4a_4 a_2^2 + a_3^2 a_2)x^5y \\
& + 5(8a_4 a_3 a_0 + a_3^2 a_1 - 4a_4 a_2 a_1)x^4y^2 + 20(a_3^2 a_0 - a_4 a_1^2) x^3 y^3 \notag \\
& - 5(8a_4 a_1 a_0 + a_3 a_1^2 - 4 a_3 a_2 a_0)x^2 y^4 - 2(16a_4 a_0^2 + 2 a_3 a_1 a_0 - 4 a_2^2 a_0 + a_2 a_1^2)xy^5   \notag \\ 
& - (a_1^3 + 8a_3 a_0^2 - 4 a_2 a_1 a_0)y^6. \notag 
\end{align}
We call a quadratic form divisor $f$ of $F_6$ \emph{significant} if the quartic form $G = F_6/f$ satisfies $J(G) = 0$. \\ 

It turns out that the covariant $F_6$ and its significant factors controls the behaviour of $\Aut F$. We then have the following theorem:

\begin{theorem} \label{BQFMT} Let $F$ be a binary quartic form with integer coefficients and non-zero discriminant. 
\begin{enumerate}
\item $\Aut F = \{\pm I_{2 \times 2}\}$ if and only if $F_6$ does not have any real rationally good significant quadratic factors. 
\item $\Aut F$ is generated  by $U_f \in \GL_2(\bQ)$ and $-I_{2 \times 2}$ if and only if $F_6$ has a unique real rationally good significant factor $f$. In this case $\Aut F$ is isomorphic to $\C_2 \times \C_2$ or $\C_4$. 
\item 
\[\Aut F = \{\pm I_{2 \times 2}, \pm U_{f_1}, \pm U_{f_2}, \pm U_{f_3}\} \cong \D_4\]
if and only if $F_6$ can be written as $ F_6 = f_1 f_2 f_3$ where $f_i$ is a real rationally good significant factor of $F_6$ for $i = 1,2,3$. 
\end{enumerate}
\end{theorem}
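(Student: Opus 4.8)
The plan is to deduce this from the analogous statement over $\bC$, exactly as was done in the cubic case. First I would establish the quartic analogue of Proposition \ref{Julia gen}: a set of representatives for $\Aut_\bC^\ast F$ in $\GL_2(\bC)$ is given by $\{\pm I, \pm U_{f_1}, \pm U_{f_2}, \pm U_{f_3}\}$, where $f_1, f_2, f_3$ are the (at most three) quadratic factors of $F_6$ that are significant, i.e. for which $F_6/f_i$ has vanishing $J$-invariant. This is where the bulk of the work lies. By Lemma \ref{quartic conj} every quartic with non-zero discriminant is $\GL_2(\bC)$-equivalent to $F_1 = x^4 + Ax^2y^2 + y^4$, and by Lemma \ref{conj lem} together with the fact that $F_6$ and the $f_i$ are covariants, it suffices to check the claim for $F_1$ (and, for the $J=0$ and $I=0$ degenerations, for $x^4 + y^4$ and $x(x^3+y^3)$, which are handled by the same explicit computation). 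For $F_1$ one computes $F_6$ explicitly from (\ref{sextic cov}), factors it into three quadratics, checks that each is significant, and verifies that the corresponding $\pm U_{f_i}$ — together with $\pm I$ — are precisely the Klein four-group of automorphisms exhibited in the proof of Proposition \ref{PGL auto prop} (and, when $I$ or $J$ vanishes, that the extra significant factors produce the order-3 or order-4 elements of $\A_4$ or $\D_4$). The sign ambiguity $\pm U_f$ is exactly the kernel $\{\pm I\}$ of $\Aut_\bC F \to \Aut_\bC^\ast F$ for $d=4$, so each projective automorphism lifts to two genuine linear ones.

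Next I would pass from $\bC$ to $\bQ$. Since $d = 4$ is even, the kernel $\{\mu_4 I\}$ of $\Aut_\bC F \to \Aut_\bC^\ast F$ contains $-I$, so $\pm I$ always lies in $\Aut F$; this already gives the ``$\{\pm I\}$'' in part (1) and explains why every case is a $\C_2$-extension. The matrix $U_f = |\Delta(f)|^{-1/2}\M_f$ lies in $\GL_2(\bQ)$ if and only if $f$ is rationally good in the sense defined: writing $f = \lambda g$ with $g$ integral and $|\Delta(g)|$ a perfect square makes $\M_g$ integral and $|\Delta(g)|^{-1/2}$ rational, while the converse follows by clearing denominators and inspecting entries, just as the rationality of $\T_\theta$ in Theorem \ref{BCFMT} forced $\theta \in \bQ$. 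Moreover $U_f$ depends only on the proportionality class of $f$, so it is insensitive to the choice of integral representative. Therefore the rational automorphisms are exactly the $\pm U_{f_i}$ for those significant factors $f_i$ that are both real (so that $|\Delta(f_i)|$ and $\M_{f_i}$ make sense over $\bR$, matching $U_f$ being defined for real $f$) and rationally good; counting how many of the three significant factors have this property gives the trichotomy in parts (1)--(3).

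Finally I would identify the group structure. When exactly one $f_i$ is real and rationally good, $\Aut F = \langle -I, U_{f_i}\rangle$ is abelian of order $4$, hence $\C_2 \times \C_2$ or $\C_4$ according to whether $U_{f_i}^2 = I$ or $U_{f_i}^2 = -I$; by Lemma \ref{3 and 4} the latter (an element of projective order $2$ lifting to linear order $4$) forces $J(F) = 0$ — consistent with the fact that the $\D_4$ case has a genuine order-$4$ element. When all three $f_i$ are real and rationally good, $\Aut F = \{\pm I, \pm U_{f_1}, \pm U_{f_2}, \pm U_{f_3}\}$ is the full group of order $8$; it is non-abelian (the $U_{f_i}$ project to the three distinct involutions of the Klein four-group, and $\Aut_\bC^\ast F = \C_2 \times \C_2$ in the non-degenerate case forces $\Aut_\bC F$ to be the unique non-abelian order-$8$ extension that is not the quaternion group here), so it is $\D_4$. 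The main obstacle is the explicit verification that the three quadratic factors of $F_6$ are precisely the significant ones and that they biject with the three involutions in $\Aut_\bC^\ast F$ — equivalently, understanding the geometric meaning of ``$J(F_6/f) = 0$'' — but since the computation can be reduced by covariance to the one-parameter family $x^4 + Ax^2y^2 + y^4$, it is a finite calculation.
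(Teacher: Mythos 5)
Your strategy is essentially the paper's: identify the significant quadratic factors of $F_6$ with the Cremona covariants (Lemmas \ref{sextic Klein}, \ref{sig facts} and \ref{Cre sig} in the paper), show that the associated matrices represent $\Aut_\bC^\ast F$ by reducing to the normal form $x^4+Ax^2y^2+y^4$ (Proposition \ref{quartic complex}), and then observe that $U_f\in\GL_2(\bQ)$ exactly when $f$ is real and rationally good, so that counting such factors gives the trichotomy. The only structural difference is that the paper interposes an explicit computation of $\Aut_\bR F$ (Proposition \ref{real quartic auto}), where the case analysis on the number of real roots $\chi(F)$ is carried out --- in particular, for $\chi(F)=0$ two of the Cremona covariants are purely imaginary and must be rescaled by $\mu_4$ before the reality and rationality tests are applied; your ``real, or proportional to a real form'' formulation absorbs this, so nothing essential is lost.

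One assertion in your final paragraph is wrong, though it is not load-bearing for the statement of the theorem. You claim that $U_{f}^2=-I_{2\times 2}$ (the $\C_4$ case) ``forces $J(F)=0$ by Lemma \ref{3 and 4}.'' Lemma \ref{3 and 4} concerns elements of order $4$ in $\Aut_\bC^\ast F\subset\PGL_2(\bC)$, i.e.\ of \emph{projective} order $4$; an element $U_f$ with $U_f^2=-I_{2\times 2}$ has linear order $4$ but projective order $2$, so the lemma does not apply, and the conclusion is false. Indeed, since $\M_f^2=\Delta(f)\,I_{2\times2}$, one has $U_f^2=\sgn(\Delta(f))\,I_{2\times 2}$, so the dichotomy $\C_2\times\C_2$ versus $\C_4$ is governed by the sign of $\Delta(f)$ for the unique real rationally good significant factor $f$, not by the vanishing of $J(F)$. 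For a concrete illustration, $F(x,y)=x^4+x^2y^2+y^4$ has $J(F)=70\neq 0$ and nonzero discriminant, yet admits the rational automorphism
\[
U_{x^2+y^2}=\begin{pmatrix} 0 & 1 \\ -1 & 0\end{pmatrix},
\]
which squares to $-I_{2\times 2}$. The rest of your group-theoretic identification (in particular that the order-$8$ case is non-abelian, hence $\D_4$, because the product of the two involutive $U_{f_i}$'s is, up to sign, the third one, which has linear order $4$) is sound.
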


We remark that the sextic covariant $F_6$ of a binary quartic form $F$ is always a \emph{Klein form}; see Lemma \ref{sextic Klein}. This fact does not appear to be well-known. Given the significance of Klein forms in problems involving the super-elliptic equation (see \cite{BeDa}), this phenomenon may be of independent interest.

\subsection{Binary sextic Klein forms and significant quadratic factors}

There is a simple characterization of the elements in $\Aut F$ in terms of \emph{significant} quadratic factors of the sextic covariant $F_6$ given in (\ref{sextic cov}). \\ \\
A degree six binary form
 \[G(x,y) = g_6 x^6 + g_5 x^5 y + g_4 x^4 y^2 + g_3 x^3 y^3 + g_2 x^2 y^4 + g_1 xy^5 + g_0 y^6\] 
is said to be a \emph{Klein form} if its coefficients satisfy the following quadratic equations (see \cite{BeDa}):
\begin{align} \label{Klein} & 10 g_6 g_2 - 5 g_5 g_3 + 2 g_4^2 = 0 \\
& 25 g_6 g_1 - 5 g_5 g_2 + g_3 g_4 = 0 \notag \\
& 50 g_6 g_0 - 2 g_2 g_4 + g_3^2 = 0. \notag
\end{align}
Moreover it is known that all binary sextic Klein forms with complex coefficients and non-zero discriminant are $\GL_2(\bC)$-equivalent to each other, a fact already known to Klein \cite{Kle}. \\ 

We have the following fact, which appears to be new:

\begin{lemma} \label{sextic Klein} Let $F$ be a binary quartic form with complex coefficients and non-zero discriminant. Then its sextic covariant $F_6$, given in (\ref{sextic cov}), is a Klein form with non-zero discriminant. 
\end{lemma}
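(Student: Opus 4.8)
The plan is to exploit the fact that all the objects in play are covariants, so the statement reduces to a single normal form plus a covariance argument. First I would record that $F_6$ is, by construction, a $\GL_2$-covariant of $F$ of degree $6$ in $(x,y)$ and of some fixed weight in the coefficients of $F$; and I would observe that the three Klein equations \eqref{Klein} are themselves $\SL_2(\bC)$-invariant conditions on the coefficients of a binary sextic — equivalently, they cut out the $\GL_2(\bC)$-orbit (closure) of the canonical Klein sextic. This invariance is the standard fact that the quadrics \eqref{Klein} are (up to scalars) the components of the second transvectant $(G,G)^{(4)}$, or can be verified directly by checking invariance under the generators of $\SL_2$, namely the diagonal torus and the unipotent $\left(\begin{smallmatrix} 1 & 1 \\ 0 & 1\end{smallmatrix}\right)$. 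Since being a Klein form is thus a $\GL_2(\bC)$-equivariant property, and $F \mapsto F_6$ intertwines the $\GL_2(\bC)$-actions, it suffices to verify the lemma for a single binary quartic $F$ with non-zero discriminant in each $\GL_2(\bC)$-orbit.

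By Lemma \ref{quartic conj}, every binary quartic with non-zero discriminant is $\GL_2(\bC)$-equivalent to $F_A = x^4 + A x^2 y^2 + y^4$ for some $A \in \bC$ with $A \ne \pm 2$ (the non-vanishing of the discriminant). So the second step is the explicit computation: plug $a_4 = 1$, $a_3 = 0$, $a_2 = A$, $a_1 = 0$, $a_0 = 1$ into \eqref{sextic cov}. The odd-power coefficients $g_5, g_3, g_1$ all vanish because $a_3 = a_1 = 0$, and one gets $g_6 = -g_0 = \text{(something like } 8 - 4A\text{)}$ up to normalization, $g_4 = -g_2$, and $g_3 = 0$; substituting into \eqref{Klein} the three equations collapse to polynomial identities in $A$ that one checks hold identically, and one simultaneously checks $\Delta(F_6) \ne 0$ whenever $A \ne \pm 2$ (this last is a nonzero polynomial in $A$ vanishing only at $A = \pm 2$, matching the discriminant condition). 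Since a single quartic suffices per orbit and $\{F_A\}$ sweeps out all orbits, the covariance argument of the previous paragraph then finishes the proof.

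Alternatively, and perhaps more cleanly, one can avoid even this modest family by noting that a Klein sextic is unique up to $\GL_2(\bC)$ (as stated in the excerpt), so it would be enough to show $F_6$ is nonzero of non-zero discriminant for some one quartic and that $F_6$ never degenerates; but the first route via $F_A$ is the most transparent since it handles the non-vanishing of $\Delta(F_6)$ on the nose.

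The main obstacle I expect is purely bookkeeping: the sextic covariant \eqref{sextic cov} is unwieldy, and verifying that the three quadratic Klein relations \eqref{Klein} are genuinely $\SL_2(\bC)$-invariant (rather than merely $\GL_2(\bC)$-semi-invariant with a possibly inconvenient character) requires care about weights — one must confirm that a scalar substitution $T = \lambda I$ rescales all of $g_6, \dots, g_0$ by the same power of $\lambda$, so that each Klein equation is homogeneous of a single degree and hence preserved. Once the weights line up, the covariance reduction is immediate and the remaining work is the one-parameter computation with $F_A$, which is routine.
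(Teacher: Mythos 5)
Your proposal is correct and takes essentially the same route as the paper: reduce by covariance to the normal form $x^4 + Ax^2y^2 + y^4$ (after noting that the Klein conditions are preserved under $\GL_2(\bC)$-substitution) and then verify the relations (\ref{Klein}) and the non-vanishing of the discriminant by direct computation. One minor correction to your predicted bookkeeping: for that normal form the surviving coefficients are $g_5 = -g_1 = 8(4 - A^2)$ rather than $g_6 = -g_0$, so $F_6$ is proportional to $xy(x^4 - y^4)$ independently of $A$, which makes the verification of (\ref{Klein}) and of $\Delta(F_6) \ne 0$ immediate, exactly as in the paper.
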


\begin{proof} Since all binary quartic forms with non-zero discriminant are equivalent to a form of the shape $x^4 + A x^2 y^2 + y^4$ for some complex number $A$, it suffices to verify that the sextic covariant of $F(x,y) = x^4 + Ax^2 y^2 + y^4$ is a Klein form. A quick calculation shows that $F_6$ is proportional over $\bC$ to 
\[G(x,y) = xy(x^4 - y^4),\]
which is independent of $A$. We then see that $\Delta(G) \ne 0$ and that the coefficients of $G$ satisfy the quadratic equations in (\ref{Klein}). 
\end{proof} 

By Lemma \ref{sextic Klein}, the deduction of Theorem \ref{BQFMT} from Proposition \ref{real quartic auto} will follow from the following lemmas. 

\begin{lemma} \label{sig facts} Let $G$ be a sextic Klein form with non-zero discriminant. Then $G$ can be written as $G = G_1 G_2 G_3$, where each $G_i$ is a significant quadratic factor of $G$, in only one way up to permutation of the factors and up to homothety over $\bC$. 
\end{lemma}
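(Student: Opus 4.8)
The plan is to reduce to a single normal form and then compute. By the remark immediately preceding the lemma, all binary sextic Klein forms with complex coefficients and non-zero discriminant are $\GL_2(\bC)$-equivalent to each other; moreover, from the proof of Lemma \ref{sextic Klein} we may take the model form to be $G_0(x,y) = xy(x^4 - y^4)$, which is (a scalar multiple of) the sextic covariant of $x^4 + y^4$. Since the property of being a significant quadratic factor is a $\GL_2(\bC)$-covariant notion — it is defined by the vanishing of the $\GL_2(\bC)$-invariant $J(G/f)$, and $(G/f)_T = G_T/f_T$ under the substitution action — it suffices to prove the statement for $G_0$ and then transport the conclusion along any $T$ with $(G_0)_T \sim G$.

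For $G_0 = xy(x^4-y^4) = xy(x^2-y^2)(x^2+y^2)$, the six linear factors are $x, y, x-y, x+y, x-iy, x+iy$, i.e. the roots in $\bP^1(\bC)$ are $0, \infty, 1, -1, i, -i$. First I would exhibit one significant factorization explicitly: group the six roots into the three pairs $\{0,\infty\}$, $\{1,-1\}$, $\{i,-i\}$, giving $G_1 = xy$, $G_2 = x^2 - y^2$, $G_3 = x^2+y^2$ (up to scalars, and indeed $G_0 = G_1 G_2 G_3$). A direct computation of $J$ via \eqref{J} on each quotient quartic $G_0/G_i$ — for instance $G_0/G_1 = x^4 - y^4$, for which $a_3=a_1=a_2=0$ and hence $J = 0$ — shows each $G_i$ is significant. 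So an expression of the required form exists.

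Next, for uniqueness up to permutation and homothety, I would argue that a quadratic factor $f$ of $G_0$ is significant if and only if the pair of roots it picks out is one of those three pairs. The cleanest route is again invariant-theoretic: $J(G_0/f) = 0$ means, by Lemma \ref{quartic conj}, that the quartic $G_0/f$ is $\GL_2(\bC)$-equivalent to $x^4+y^4$, equivalently its four roots form a \emph{harmonic} (equianharmonic-free) configuration with cross-ratio $-1$ in some ordering — precisely the condition that the four points have $j$-invariant forcing $J=0$. So I must check, among the $\binom{6}{2}=15$ ways to choose which two of the six roots $\{0,\infty,1,-1,i,-i\}$ are deleted, exactly which of the resulting $4$-point sets are harmonic. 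A short cross-ratio computation shows the three "antipodal" pairs $\{0,\infty\},\{1,-1\},\{i,-i\}$ are the only deletions leaving a harmonic quadruple (the remaining twelve choices leave quadruples with cross-ratio a primitive sixth root of unity or $2,\tfrac12,-1$ in configurations that do not all give $J=0$ — this is where the one genuine case-check lives). Since these three pairs partition the six roots, the factorization $G_0 = G_1G_2G_3$ into significant quadratics is unique up to reordering the $G_i$ and rescaling each; transporting along $T$ gives the general statement.

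The main obstacle is the uniqueness half: one must verify that of the fifteen quadratic divisors of $G_0$, only the three listed are significant. I would organize this by symmetry — the automorphism group $\Aut_\bC^\ast(x^4+y^4) \cong \D_4$ (of order $8$) acts on the six roots of $G_0$ and on the $15$ pairs, cutting the number of orbits of pairs to check down to two or three representatives, after which a single cross-ratio evaluation per orbit finishes it. An alternative, avoiding cross-ratios entirely, is to parametrize $f = \alpha x^2 + \beta xy + \gamma y^2$ dividing $G_0$, perform polynomial division to get $G_0/f$ as an explicit quartic in $\alpha,\beta,\gamma$, and solve $J(G_0/f)=0$ together with the divisibility constraint; this is more mechanical but the system is small enough to be transparent.
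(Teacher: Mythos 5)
Your proposal is correct and follows essentially the same route as the paper: reduce to the single model $xy(x^4-y^4)$ via the $\GL_2(\bC)$-equivalence of nondegenerate sextic Klein forms, exhibit the factorization into $xy$, $x^2-y^2$, $x^2+y^2$, and rule out the remaining twelve quadratic divisors by a finite check (the paper computes $J(G/V)$ directly for a representative $V$; you use the equivalent harmonic cross-ratio criterion organized by the $\D_4$-symmetry, which is a slightly cleaner way to run the same verification). One small inaccuracy worth fixing: your parenthetical claim that the twelve non-antipodal deletions yield cross-ratios among primitive sixth roots of unity or $\{2,\tfrac12,-1\}$ would, if true, contradict your own conclusion, since the latter values are precisely the harmonic ones; in fact those twelve quadruples all lie in a single projective equivalence class with cross-ratio in the anharmonic orbit of $-i$, which is neither harmonic nor equianharmonic, so the stated conclusion stands.
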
 

\begin{proof} Since all binary sextic Klein forms with non-zero discriminant lie in a single $\GL_2(\bC)$-orbit, it suffices to prove Lemma \ref{sig facts} for just the Klein form $G(x,y) = xy(x^4 - y^4)$. We see that factoring $G$ as $G = G_1 G_2 G_3$, with 
\[G_1 = xy, G_2 = x^2 - y^2, G_3 = x^2 + y^2\]
that each $G_i$ is a significant factor of $G$; that is, the quartic form $\G_1 = G/G_1 = x^4 - y^4$ satisfies $J(\G_1) = 0$, and similarly for $\G_i = G/G_i$ for $i = 2,3$. \\ \\
Now pick another quadratic factor of $G$, say $V(x,y) = x(x+y)$. Then $\V = G/V = y(x-y)(x^2 +  y^2)$ has
\[J(\V) = 72(0)(-1)(-1) + 9(1)(-1)(1) - 27(0)(1)^2 - 27(-1)(1)^2 - 2 (-1)^3 = 20.\]
A similar calculation shows that for any other quadratic factor $V$ distinct from $G_1, G_2, G_3$, that $J(G/V) \ne 0$, whence $V$ is not a significant factor of $G$. 
\end{proof}

\subsection{$\Aut_\bC^\ast F$ for binary quartic forms}
\label{real automorphisms}

In this section we aim to show that $\Aut_\bC^\ast (F)$ is determined explicitly by certain quadratic covariants of a binary quartic form $F$, called the \emph{Cremona covariants}, which are significant divisors of $F_6$. Let 
\[F(x,y) = a_4 x^4 + a_3 x^3 y + a_2 x^2 y^2 + a_1 xy^3 + a_0 y^4\]
be a binary quartic form. We shall assume, by applying a $\GL_2(\bZ)$-action if necessary, that $a_4 \ne 0$. Unlike the cubic case, there are no rational quadratic covariants for binary quartic forms. However, there are three \emph{irrational} quadratic covariants discovered by Cremona \cite{Cre}. These covariants can be given explicitly in terms of the roots of $F(x,1)$. Define $\chi(F)$ to be the number of real roots of $F(x,1)$. We will then label the roots $\theta_i$, $i = 1,2,3,4$ of $F(x,1)$ as in \cite{BE}: 
\begin{equation} \label{Bean label} \begin{cases} \theta_1 > \theta_2 > \theta_3 > \theta_4, & \text{if } \chi(F) = 4, \\ \\
\theta_1 > \theta_2, \theta_3 = \ol{\theta_4}, \Im(\theta_3) > 0, & \text{if } \chi(F) = 2, \\ \\
\theta_1 = \ol{\theta_2}, \theta_3 = \ol{\theta_4}, \Im(\theta_1) > 0 , \Im(\theta_3) < 0, & \text{if } \chi(F) = 0. \end{cases}
\end{equation}
Here $\Im(z)$ refers to the imaginary part of the complex number $z$. Put
\begin{equation} \label{Auto coeff} A_1 = a_4(\theta_1 + \theta_2 - \theta_3 - \theta_4), B_1 = 2a_4(\theta_3 \theta_4 - \theta_1 \theta_2), C_1 = a_4(\theta_1 \theta_2(\theta_3 + \theta_4) - \theta_3 \theta_4(\theta_1 + \theta_2)), \end{equation}
\[A_2 = a_4(\theta_1 + \theta_3 - \theta_2 - \theta_4), B_2 = 2a_4(\theta_2 \theta_4 - \theta_1 \theta_3), C_2 = a_4(\theta_1 \theta_3(\theta_2 + \theta_4) - \theta_2 \theta_4(\theta_1 + \theta_3)),\]
and
\[A_3 = a_4(\theta_1 + \theta_4 - \theta_2 - \theta_3), B_3 = 2a_4(\theta_2 \theta_3 - \theta_1 \theta_4), C_3 = a_4(\theta_1 \theta_4(\theta_2 + \theta_3) - \theta_2 \theta_3(\theta_1 + \theta_4))\]
and define the $i$-th \emph{Cremona covariant} to be
\begin{equation} \label{Cremona cov} \fC_i(x,y) = A_i x^2 + B_i xy + C_i y^2, i = 1,2,3.\end{equation}
Put
\begin{equation} \label{Cremona disc} D_i = \Delta(\fC_i) \text{ for } i = 1,2,3. \end{equation}
One checks that the $D_i$'s satisfy
\begin{equation} \label{Xi} D_1 = 4a_4^2(\theta_1 - \theta_3)(\theta_1 - \theta_4)(\theta_2 - \theta_3)(\theta_2 - \theta_4), \end{equation}
\[D_2 = 4a_4^2(\theta_1 - \theta_2)(\theta_1 - \theta_4)(\theta_3 - \theta_2)(\theta_3 - \theta_4),\]
and
\[D_3 = 4a_4^2(\theta_1 - \theta_2)(\theta_1 - \theta_3)(\theta_4 - \theta_2)(\theta_4 - \theta_3).\]
We note that (\ref{Xi}) implies that $D_i \ne 0$ for $i = 1,2,3$ whenever $\Delta(F) \ne 0$. In \cite{Cre}, Cremona showed that the Cremona covariants $\fC_i$ satisfies 
\begin{equation} \label{F6 factor} F_6(x,y) = \fC_1 (x,y) \fC_2 (x,y) \fC_3 (x,y).
\end{equation} 

Put 
\[\U_i = \frac{1}{\sqrt{D_i}} \M_{\fC_i}, i = 1,2,3.\]
We have the following proposition:

\begin{proposition} \label{quartic complex} Let $F$ be a binary quartic form with complex coefficients and non-zero discriminant. Suppose that the $x^4$-coefficient of $F$ is non-zero and that $I(F) J(F) \ne 0$. Then a set of representatives of $\Aut_\bC^\ast (F)$ in $\GL_2(\bC)$ is given by
\[\{I_{2 \times 2}, \U_1, \U_2, \U_3\}.\]
Moreover, for each $i = 1,2,3$ we have $F_{\U_i} = F$ with respect to the action (\ref{sub action}). 
\end{proposition}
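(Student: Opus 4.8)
The plan is to exploit the same reduction strategy used for the cubic case in Proposition~\ref{Julia gen}: since $\fC_1,\fC_2,\fC_3$ are covariants of $F$ (they are built symmetrically from the roots of $F(x,1)$ with the leading coefficient $a_4$ inserted to clear denominators, exactly as Cremona arranged), the matrices $\U_i = \frac{1}{\sqrt{D_i}}\M_{\fC_i}$ transform equivariantly under $\GL_2(\bC)$-substitution in the sense of Lemma~\ref{conj lem}. Hence it suffices to verify the proposition on a single representative of each $\GL_2(\bC)$-orbit of binary quartic forms with $I(F)J(F)\ne 0$. By Lemma~\ref{quartic conj}, every such form is equivalent to $F_1(x,y) = x^4 + Ax^2y^2 + y^4$ for some $A \in \bC$ (and the conditions $I \ne 0$, $J \ne 0$ exclude only finitely many values of $A$, none of which we need), so I would carry out the verification for $F_1$ directly. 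The roots of $F_1(x,1)$ can be written explicitly, one computes the three Cremona covariants $\fC_i$ of $F_1$ — they come out (up to scalars) to be the three quadratic factors $xy$, $x^2-y^2$, $x^2+y^2$ of the sextic $F_6 \propto xy(x^4-y^4)$ identified in the proof of Lemma~\ref{sig facts} — and then one checks by direct substitution that each $\U_i$ fixes $F_1$, i.e. $(F_1)_{\U_i} = F_1$. This establishes the "moreover" clause simultaneously with the main claim.

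First I would record that by Proposition~\ref{PGL auto prop}, when $I(F)J(F)\ne 0$ we have $\Aut_\bC^\ast(F) \cong \C_2 \times \C_2$, so it has exactly four elements; thus it is enough to produce four \emph{distinct} matrices in $\GL_2(\bC)$ that represent automorphisms in $\PGL_2(\bC)$, and these must then be a full set of representatives. The identity $I_{2\times 2}$ is trivially one of them. Next I would verify $(F_1)_{\U_i} = F_1$ for $i=1,2,3$: using $\M_f = \left(\begin{smallmatrix} b & 2c \\ -2a & -b\end{smallmatrix}\right)$ and $\det \M_f = -\Delta(f) = -(b^2-4ac)$, one has $\det \U_i = \M_{\fC_i}/D_i$ evaluated as $-D_i/D_i = -1$, so $\U_i \in \GL_2(\bC)$ and $\U_i^2$ is a scalar matrix; a short computation with the explicit $\fC_i$ of $F_1$ shows $\U_i^2 = -I_{2\times 2}$, so $\U_i$ has order $4$ in $\GL_2(\bC)$ but order $2$ in $\PGL_2(\bC)$, matching a nonidentity element of $\C_2\times\C_2$. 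Then I would confirm $\{I, \U_1, \U_2, \U_3\}$ are pairwise non-proportional (immediate from their explicit entries) and that $\U_1\U_2 = \pm\U_3$ in $\PGL_2$, so they close up to the Klein four-group. Finally, since $\Aut_\bC^\ast(F_1)$ has order $4$ and we have exhibited four non-proportional representatives of automorphisms, these are exactly all of them; transporting back along an arbitrary $T\in\GL_2(\bC)$ with $F = (F_1)_T$ via Lemma~\ref{conj lem} and the covariance of the $\fC_i$ gives the statement for general $F$.

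The main obstacle I anticipate is bookkeeping rather than conceptual: the Cremona covariants are defined through the ordering convention \eqref{Bean label} for the roots (with real/complex cases), and for the complex-coefficient form $F_1$ with generic $A$ one must be careful that the three covariants $\fC_1,\fC_2,\fC_3$ one computes from \eqref{Auto coeff} really do match the three significant quadratic factors of $F_6$ as in \eqref{F6 factor} — i.e. that Cremona's factorization holds at the level of these specific polynomials, not merely up to relabeling. One clean way around this is to first prove the identity \eqref{F6 factor} is a polynomial identity in the $a_i$ (which, being a covariant identity, again reduces to checking it on $F_1$), and then use Lemma~\ref{sig facts} to know the factorization of $F_6 \propto xy(x^4-y^4)$ is unique up to permutation and scaling; this pins down $\{\fC_1,\fC_2,\fC_3\}$ as $\{xy,\ x^2-y^2,\ x^2+y^2\}$ up to scalars without needing to track the root ordering delicately. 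A secondary point requiring care is the choice of square root $\sqrt{D_i}$: since $\U_i$ and $-\U_i$ represent the same element of $\PGL_2(\bC)$ and both satisfy $(F_1)_{\U_i}=F_1$ (as the substitution action of $-M$ equals that of $M$ on an even-degree form), the ambiguity in $\sqrt{D_i}$ is harmless, and I would note this explicitly.
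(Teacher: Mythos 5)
Your proposal is correct, but it takes a genuinely different route from the paper. The paper does \emph{not} reduce to a normal form here: it works directly with a general quartic with roots $\theta_1,\dots,\theta_4$, first checking via the explicit formulas (\ref{Auto coeff}) that the M\"obius action of $\M_{\fC_1}$ induces the double transposition $(\theta_1\theta_2)(\theta_3\theta_4)$ on the roots (so that $\U_1\in\Aut_\bC^\ast F$), and then verifying the stronger claim $F_{\U_1}=F$ by reducing the preservation of the leading coefficient to a polynomial identity in the $\theta_j$, which is checked in Sage; the count $\#\Aut_\bC^\ast(F)=4$ from Proposition \ref{PGL auto prop} closes the argument exactly as in your write-up. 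Your reduction to $x^4+Ax^2y^2+y^4$ via covariance of the $\fC_i$ and Lemma \ref{conj lem} mirrors the paper's own strategy for the \emph{cubic} case (Proposition \ref{Julia gen}) and has the advantage of replacing the symbolic computation by a three-line hand check on $xy$, $x^2-y^2$, $x^2+y^2$; the cost is that you must justify that the unordered triple $\{\fC_1,\fC_2,\fC_3\}$ transforms equivariantly (your appeal to the factorization (\ref{F6 factor}) together with the uniqueness statement of Lemma \ref{sig facts} is a clean way to do this, and is essentially how the paper handles the analogous point in Lemma \ref{Cre sig}). One slip worth fixing: since $\M_f^2=\Delta(f)I_{2\times 2}$ and $D_i=\Delta(\fC_i)$, you get $\U_i^2=I_{2\times 2}$, not $-I_{2\times 2}$, so $\U_i$ already has order $2$ in $\GL_2(\bC)$ (with $\det\U_i=-1$, it is an involution, not an element of order $4$). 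This does not affect your argument, since all you need is that $\U_i$ is a non-scalar matrix of order $2$ in $\PGL_2(\bC)$.
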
 

\begin{proof} By Proposition \ref{PGL auto prop} and its proof, it follows that $\Aut_\bC^\ast (F) \cong \C_2 \times \C_2$. Therefore, it suffices to check that $\M_{\fC_i} \in \Aut_\bC^\ast(F)$ for each $i = 1,2,3$. Let us consider the action of $\M_{\fC_1}$ on $\theta_1$, via the action in (\ref{U action}). We have
\[U_1 : \theta_1 \mapsto \frac{B_1 \theta_1 + 2C_1}{-2A_1 \theta_1 - B_1}.\]
Expanding using (\ref{Auto coeff}), we obtain
\[\frac{B_1 \theta_1 + 2C_1}{-2A_1 \theta_1 - B_1} = \frac{-2 \theta_2 (\theta_1 - \theta_3)(\theta_1 - \theta_4)}{-2(\theta_1 - \theta_3)(\theta_1 - \theta_4)} = \theta_2. \]
Next we see that
\[\frac{B_1 \theta_3 + 2C_1}{-2A_1 \theta_3 - B_1} = \frac{2 \theta_4(\theta_3 - \theta_1)(\theta_3 - \theta_2)}{2(\theta_3 - \theta_1)(\theta_3 - \theta_2) } = \theta_4.\]
A similar calculation shows that $U_1$ sends $\theta_2$ to $\theta_1$ and $\theta_4$ to $\theta_3$. This shows that $\M_{\fC_1}$ permutes the roots of $F$. A similar calculation shows that $\M_{\fC_2}, \M_{\fC_2}$ similarly permute the roots of $F(x,1)$. \\ \\
To confirm that $F_{\U_1} = F$ say, we further need to check that $\U_1$ fixes the leading coefficient of $F$. This is equivalent to checking that
\begin{equation} \label{two sides} \frac{1}{D_1^2} \left(a_4 B_1^4 + a_3 B_1^3 (-2A_1) + a_2 B_1^2 (-2A_1)^2 + a_1 B_1 (-2A_1)^3 + a_0 (-2A_1)^4\right) = a_4. \end{equation}
Using the fact that $a_4 \ne 0$ and the Vieta relations
\[\frac{a_3}{a_4} = -(\theta_1 + \theta_2 + \theta_3 + \theta_4),\]
\[\frac{a_2}{a_4} = \theta_1 \theta_2 + \theta_1 \theta_3 + \theta_1 \theta_4 + \theta_2 \theta_3 + \theta_2 \theta_4 + \theta_3 \theta_4,\]
\[\frac{a_1}{a_4} = - (\theta_1 \theta_2 \theta_3 + \theta_1 \theta_2 \theta_4 + \theta_1 \theta_3 \theta_4 + \theta_2 \theta_3 \theta_4),\]
and
\[\frac{a_0}{a_4} = \theta_1 \theta_2 \theta_3 \theta_4,\]
we see that (\ref{two sides}) is equivalent to checking that
\[(\theta_3 \theta_4 - \theta_1 \theta_2)^4 + (\theta_1 + \theta_2 + \theta_3 + \theta_4)(\theta_3 \theta_4 - \theta_1 \theta_2)^3(\theta_1 + \theta_2 - \theta_3 - \theta_4) \]
\[+ ( \theta_1 \theta_2 + \theta_1 \theta_3 + \theta_1 \theta_4 + \theta_2 \theta_3 + \theta_2 \theta_4 + \theta_3 \theta_4) (\theta_3 \theta_4 - \theta_1 \theta_2)^2(\theta_1 + \theta_2 - \theta_3 - \theta_4)^2\]
\[+(\theta_1 \theta_2 \theta_3 + \theta_1 \theta_2 \theta_4 + \theta_1 \theta_3 \theta_4 + \theta_2 \theta_3 \theta_4)(\theta_3 \theta_4 - \theta_1 \theta_2)(\theta_1 + \theta_2 - \theta_3 - \theta_4)^3\]
\[+ \theta_1 \theta_2 \theta_3 \theta_4 (\theta_1 + \theta_2 - \theta_3 - \theta_4)^4\]
is equal to
\[(\theta_1 - \theta_3)^2(\theta_1 - \theta_4)^2(\theta_2 - \theta_3)^2(\theta_2 - \theta_4)^2.\]
This can be done using any standard computer algebra package (in particular, we used Sage). Thus $\U_1 \in \Aut_\bC F$. The verification that $\U_2, \U_3 \in \Aut_\bC F$ follows similarly.  \end{proof}
Again we remark that the requirement for the $x^4$-coefficient to be non-zero is not essential, since the $\fC_i$'s are covariants. There is a more intrinsic way to define the Cremona covariants in terms of the \emph{cubic resolvent} of $F$ and the Hessian covariant; see \cite{Cre}. \\

The following lemma shows that the Cremona covariants $\fC_i$ are precisely the significant factors of $F_6$. 

\begin{lemma} \label{Cre sig} Let $F$ be a binary quartic form with complex coefficients and non-zero discriminant. Then for each $i = 1,2,3$, the Cremona covariant $\fC_i$ of $F$ is a significant factor of the sextic covariant $F_6$. 
\end{lemma}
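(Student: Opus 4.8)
The statement to prove is Lemma \ref{Cre sig}: that for a binary quartic form $F$ with complex coefficients and non-zero discriminant, each Cremona covariant $\fC_i$ is a significant factor of the sextic covariant $F_6$. Recall that "significant" means $F_6/\fC_i$ is a quartic form $G$ with $J(G) = 0$. The plan is to reduce to a normal form via covariance, as was done in the proofs of Lemma \ref{sextic Klein} and Lemma \ref{sig facts}, and then verify the condition by a direct computation on the normal form.

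First I would observe that by Cremona's factorization (\ref{F6 factor}), we already know $F_6 = \fC_1 \fC_2 \fC_3$, so $\fC_i$ is genuinely a quadratic factor of $F_6$; what remains is purely the verification that $J(F_6/\fC_i) = 0$ for each $i$. Next, the key structural point is that both $F_6$ and each $\fC_i$ are covariants of $F$, so the pair $(\fC_i, F_6/\fC_i)$ transforms equivariantly under the $\GL_2(\bC)$-substitution action; since $J$ is an invariant (up to a power of the determinant) of binary quartics, the vanishing of $J(F_6/\fC_i)$ is a $\GL_2(\bC)$-invariant condition on $F$. Therefore, by Lemma \ref{quartic conj}, it suffices to check the claim for the normal form $F(x,y) = x^4 + A x^2 y^2 + y^4$ — and in fact, by the computation already carried out in the proof of Lemma \ref{sextic Klein}, for such $F$ the sextic covariant $F_6$ is proportional to $G(x,y) = xy(x^4 - y^4) = xy(x^2-y^2)(x^2+y^2)$, which is independent of $A$.

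Then I would match up the three factors explicitly. For the normal form, the roots of $F(x,1)$ are symmetric under $\theta \mapsto -\theta$ and $\theta \mapsto 1/\theta$ (this is exactly the Klein four-group $\C_2 \times \C_2$ from Proposition \ref{PGL auto prop}), and one checks directly from (\ref{Auto coeff}) that the three Cremona covariants $\fC_1, \fC_2, \fC_3$ are, up to scalars, precisely $xy$, $x^2 - y^2$, and $x^2 + y^2$ — i.e. the same three significant factors $G_1, G_2, G_3$ identified in the proof of Lemma \ref{sig facts}. (Here one uses that the three partitions of $\{\theta_1,\theta_2,\theta_3,\theta_4\}$ into two pairs correspond to the three non-trivial involutions, which for the symmetric root configuration are the negation map, the inversion map, and their product.) Since that proof already verified $J(G/G_i) = 0$ for $i = 1,2,3$, the claim follows for the normal form, and hence for all $F$ with non-zero discriminant by the covariance argument above.

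The main obstacle is the bookkeeping in the middle step: one must argue carefully that the $\GL_2(\bC)$-orbit reduction is legitimate for the \emph{indexed} covariants $\fC_i$, not just for the unordered triple. The subtlety is that the labelling of the $\fC_i$ in (\ref{Auto coeff})–(\ref{Cremona cov}) depends on an ordering of the roots $\theta_i$, and conjugating $F$ by $T \in \GL_2(\bC)$ permutes the roots, hence permutes the $\fC_i$; so strictly one proves the \emph{set} $\{\fC_1, \fC_2, \fC_3\}$ consists of significant factors, which is exactly what is needed and which matches Lemma \ref{sig facts}'s "only one way up to permutation" statement. Once this is phrased correctly, the remaining verification is the already-completed computation from Lemma \ref{sig facts}, so no new hard calculation is required; alternatively, if one prefers a coordinate-free check, one can simply plug a symbolic root-vector into the formula for $J(F_6/\fC_i)$ using the Vieta relations and confirm vanishing with a computer algebra system, exactly as in the proof of Proposition \ref{quartic complex}.
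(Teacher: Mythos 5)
Your proposal is correct and follows essentially the same route as the paper: reduce via covariance to the normal form $x^4 + Ax^2y^2 + y^4$, identify the Cremona covariants there as (scalar multiples of) $xy$, $x^2-y^2$, $x^2+y^2$, and invoke the computation in Lemma \ref{sig facts}. The paper's proof is just a terser version of this; your extra care about the indexing of the $\fC_i$ under conjugation is a reasonable clarification but not a departure in method.
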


\begin{proof} Recall that each binary quartic form $F$ with complex coefficients and non-zero discriminant is equivalent to $F_A = x^4 + Ax^2 y^2 + y^4$ for some complex number $A$, and that the Cremona covariants of $F_A$ are proportional to $xy, x^2 - y^2, x^2 + y^2$. Lemma \ref{Cre sig} then follows from Lemma \ref{sig facts}. 
\end{proof}

\subsection{$\Aut_\bR F$ for real binary quartic forms}
\label{real quartic autos}

Even though we are primarily interested in $\Aut F$, which is defined to be the set of $T \in \GL_2(\bQ)$ which fixes $F$ via the action (\ref{sub action}), it will be convenient to first consider the larger group $\Aut_\bR F$. It is clear that $\Aut F \subset \Aut_\bR F$. Proposition \ref{quartic complex} shows that the matrices $\U_{i}, i = 1,2,3$ are in $\Aut_\bC F$, it thus remains to check that when it is possible that $\U_{i} \in \GL_2(\bR)$, possibly up to multiplying by a 4-th root of unity. We have the following proposition:

\begin{proposition} \label{real quartic auto} Let $F$ be a binary quartic form with real coefficients and non-zero discriminant. Then $\Aut_\bR F$ is given by: 
\[\begin{cases} \left\{\pm I_{2 \times 2}, \pm U_{\fC_1}, \pm U_{\fC_2}, \pm U_{\fC_3} \right\} & \text{if } \chi(F) = 4, \\
\left\{\pm I_{2 \times 2}, \pm U_{\fC_1} \right\} & \text{if } \chi(F) = 2, \\
\left\{\pm I_{2 \times 2}, \pm U_{\fC_1}, \pm \mu_4 U_{\fC_2}, \pm \mu_4 U_{\fC_3} \right\} & \text{if } \chi(F) = 0.
\end{cases} 
\]
\end{proposition}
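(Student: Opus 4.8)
The plan is to leverage Proposition~\ref{quartic complex}, which already gives us a complete set of representatives $\{I_{2\times 2}, \U_1, \U_2, \U_3\}$ for $\Aut_\bC^\ast F$ in $\GL_2(\bC)$, together with the fact (from Proposition~\ref{PGL auto prop}) that since $I(F)J(F)\ne 0$ we have $\Aut_\bC^\ast F \cong \C_2\times\C_2$. The kernel of $\Aut_\bC F \to \Aut_\bC^\ast F$ is $\{\pm I_{2\times 2}\}$ in the quartic case (the $4$-th roots of unity times $I_{2\times 2}$, but only $\pm 1$ survive as real scalars matter here — more precisely the full kernel is $\{\mu_4 I_{2\times 2}\}$, of which only $\pm I_{2\times 2}$ is real). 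So the first step is to record that $\Aut_\bR F$ consists precisely of those matrices among $\pm\U_i$, $\pm\mu_4\U_i$ (for $i=1,2,3$) together with $\pm I_{2\times 2}$ that actually have real entries. The question reduces entirely to: for each $i$, when is some scalar multiple (by a $4$-th root of unity) of $\U_i = \frac{1}{\sqrt{D_i}}\M_{\fC_i}$ real, and which scalar?

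The second step is a sign analysis of the discriminants $D_i$. Recall $D_i = \Delta(\fC_i)$, and the explicit products in~(\ref{Xi}): $D_1 = 4a_4^2(\theta_1-\theta_3)(\theta_1-\theta_4)(\theta_2-\theta_3)(\theta_2-\theta_4)$, and cyclically. Using the ordering convention~(\ref{Bean label}) on the roots, I would go case by case on $\chi(F)\in\{0,2,4\}$:
\begin{itemize}
\item When $\chi(F)=4$, all roots are real and distinct, so each $D_i$ is a product of $4a_4^2>0$ with four real factors; one checks from the ordering $\theta_1>\theta_2>\theta_3>\theta_4$ that each such product is positive (for $D_1$: $(\theta_1-\theta_3),(\theta_1-\theta_4),(\theta_2-\theta_3),(\theta_2-\theta_4)$ are all positive; similarly for $D_2, D_3$). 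Hence $\sqrt{D_i}$ is real and $\U_i = \frac{1}{\sqrt{D_i}}\M_{\fC_i}$ has real entries (note $\M_{\fC_i}$ has entries $B_i, 2C_i, -2A_i, -B_i$, all real since the $\theta_j$ are real). So all of $\pm\U_i$ lie in $\Aut_\bR F$, and we also need to confirm these $\U_i$ are not accidentally in $-I\cdot\{\ldots\}$ — but they are genuinely the three non-identity cosets, so $\Aut_\bR F = \{\pm I, \pm U_{\fC_1}, \pm U_{\fC_2}, \pm U_{\fC_3}\}$.
\item When $\chi(F)=2$, the roots are $\theta_1>\theta_2$ real and $\theta_3=\overline{\theta_4}$ complex conjugates with $\Im(\theta_3)>0$. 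Then $\fC_1 = A_1x^2 + B_1xy + C_1y^2$ with $A_1 = a_4(\theta_1+\theta_2-\theta_3-\theta_4)$, $B_1 = 2a_4(\theta_3\theta_4-\theta_1\theta_2)$, $C_1 = a_4(\theta_1\theta_2(\theta_3+\theta_4)-\theta_3\theta_4(\theta_1+\theta_2))$; since $\theta_3+\theta_4$ and $\theta_3\theta_4$ are real, $\fC_1$ has real coefficients. From~(\ref{Xi}), $D_1 = 4a_4^2(\theta_1-\theta_3)(\theta_1-\theta_4)(\theta_2-\theta_3)(\theta_2-\theta_4) = 4a_4^2\,|\theta_1-\theta_3|^2\,|\theta_2-\theta_3|^2 > 0$, so $\U_1 \in \GL_2(\bR)$ and $\pm U_{\fC_1}\in\Aut_\bR F$. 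For $i=2,3$ the covariants $\fC_i$ involve products mixing a real root with a complex one (e.g.\ $\theta_1\theta_3$), so $\fC_2, \fC_3$ have genuinely non-real coefficients; one argues that no $4$-th-root-of-unity multiple of $\M_{\fC_i}/\sqrt{D_i}$ can be real, because the trace or the off-diagonal ratio is a non-real number that cannot be rotated to $\bR$ simultaneously in all entries. Hence $\Aut_\bR F = \{\pm I, \pm U_{\fC_1}\}$.
\item When $\chi(F)=0$, the roots come in two conjugate pairs $\theta_1=\overline{\theta_2}$ with $\Im(\theta_1)>0$ and $\theta_3=\overline{\theta_4}$ with $\Im(\theta_3)<0$. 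Here $\fC_1$ has real coefficients ($\theta_1+\theta_2$, $\theta_1\theta_2$, $\theta_3+\theta_4$, $\theta_3\theta_4$ all real), and $D_1 = 4a_4^2(\theta_1-\theta_3)(\theta_1-\theta_4)(\theta_2-\theta_3)(\theta_2-\theta_4)$. Pairing: $(\theta_1-\theta_3)(\theta_2-\theta_4) = (\theta_1-\theta_3)\overline{(\theta_1-\theta_3)} = |\theta_1-\theta_3|^2$ (using $\theta_2=\overline{\theta_1}$, $\theta_4=\overline{\theta_3}$) and similarly $(\theta_1-\theta_4)(\theta_2-\theta_3) = |\theta_1-\theta_4|^2$, so $D_1>0$ and $\pm U_{\fC_1}\in\Aut_\bR F$. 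For $i=2,3$, however, $\fC_2$ has coefficients like $A_2 = a_4(\theta_1+\theta_3-\theta_2-\theta_4)$: now $\theta_1-\theta_2 = 2i\Im(\theta_1)$ is purely imaginary and $\theta_3-\theta_4 = 2i\Im(\theta_3)$ is purely imaginary, so $A_2$ is purely imaginary; similarly $B_2, C_2$ turn out purely imaginary, so $\fC_2 = i\cdot(\text{real form})$, and correspondingly $D_2 = \Delta(\fC_2)$ is \emph{negative} real (it picks up $i^2$), forcing $\sqrt{D_2} = i\sqrt{|D_2|}$ purely imaginary. Then $\U_2 = \frac{1}{\sqrt{D_2}}\M_{\fC_2} = \frac{1}{i\sqrt{|D_2|}}\cdot i\,\M_{(\text{real})}$, which is real — wait, so $\U_2$ itself is real. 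I would re-examine: the point is that one of $\fC_2/\sqrt{D_2}$ and $\mu_4\fC_2/\sqrt{D_2}$ is real; matching signs/branches against the principal-square-root convention stated after~(\ref{Hooley matrix}) (principal root with non-negative real part, positive imaginary part if real part vanishes) shows that the correct representative is $\pm\mu_4 U_{\fC_2}$ rather than $\pm U_{\fC_2}$, and likewise for $i=3$. Hence $\Aut_\bR F = \{\pm I, \pm U_{\fC_1}, \pm\mu_4 U_{\fC_2}, \pm\mu_4 U_{\fC_3}\}$.
\end{itemize}

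The third step is bookkeeping: in each case verify the listed set is actually closed under multiplication and hence a group (it must be, since it is the preimage in $\GL_2(\bR)$ of a subgroup of $\Aut_\bC^\ast F$ under the homomorphism induced by~(\ref{sub action}), intersected with $\GL_2(\bR)$), and confirm it has the stated size; $\Aut_\bR F$ is the full $\{\mu_4\cdot(\text{coset reps})\}$ intersected with $\GL_2(\bR)$, so one just counts which cosets survive. The main obstacle is the bookkeeping of which $4$-th root of unity to attach in the $\chi(F)=0$ case: one must pin down, using the explicit principal-branch convention for $\sqrt{D_i}$ fixed right after~(\ref{Hooley matrix}), exactly when $U_{\fC_i}$ versus $\mu_4 U_{\fC_i}$ is the real representative, and verify that this is a well-defined choice (independent of the labelling ambiguity $\theta_3\leftrightarrow\theta_4$ within a conjugate pair, which the convention~(\ref{Bean label}) resolves via the sign of the imaginary part). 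Everything else — the sign computations for $D_i$ and the verification that the $\fC_i$ have real resp.\ purely imaginary coefficients in each case — is routine algebra using~(\ref{Bean label}), (\ref{Auto coeff}), and~(\ref{Xi}), together with the already-established fact from Proposition~\ref{quartic complex} that the relevant matrices act as automorphisms.
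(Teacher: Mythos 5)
Your overall strategy --- reducing to the coset representatives $\U_1, \U_2, \U_3$ from Proposition~\ref{quartic complex} and then doing a case analysis on $\chi(F)$ via the reality properties of the $\fC_i$ and the signs of the $D_i$ --- is the same as the paper's, but there is one genuine gap and two smaller defects. The gap: you assume $I(F)J(F) \ne 0$ from the very first step (both in invoking Proposition~\ref{quartic complex} and in asserting $\Aut_\bC^\ast F \cong \C_2 \times \C_2$), but Proposition~\ref{real quartic auto} carries no such hypothesis. When $I(F) = 0$ or $J(F) = 0$ the group $\Aut_\bC^\ast F$ is $\A_4$ or $\D_4$, so there are classes beyond $\{I_{2\times 2}, \U_1, \U_2, \U_3\}$ whose lifts must be shown not to meet $\GL_2(\bR)$. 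The paper devotes its final paragraph to exactly this: a finite subgroup of $\GL_2(\bR)$ cannot contain $\A_4$, and a lift of an order-$4$ element of $\Aut_\bC^\ast F$ would have order $8$ in $\GL_2(\bR)$, hence be conjugate into $\mathrm{SO}_2(\bR)$, where it sends $F$ to $-F$. Without some such argument you have only proved the proposition under the extra hypothesis $I(F)J(F) \ne 0$.

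The two smaller defects. In the case $\chi(F) = 4$ your claim that all three $D_i$ are positive is false: with $\theta_1 > \theta_2 > \theta_3 > \theta_4$ the factor $(\theta_3 - \theta_2)$ in $D_2$ is negative, so $D_2 < 0$ (the paper notes precisely this and uses $\sqrt{D_2} = \mu_4\sqrt{|D_2|}$). Your conclusion survives only because $U_f$ is normalized by $\sqrt{|\Delta(f)|}$ rather than $\sqrt{\Delta(f)}$, and the sign of $D_2$ is exactly what later distinguishes the $\C_4$ and $\C_2 \times \C_2$ cases in Theorem~\ref{BQFMT}, so this is not a harmless slip. In the case $\chi(F) = 2$ you assert without proof that no fourth-root-of-unity multiple of $\M_{\fC_i}/\sqrt{D_i}$ is real for $i = 2,3$; the reasoning offered cannot work as stated, since $\M_f$ is always traceless, and having non-real coefficients does not preclude being proportional over $\bC$ to a real form (compare $i(x^2+y^2)$). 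What must be shown is that $\fC_2, \fC_3$ are not $\bC$-proportional to real forms. The paper's argument is indirect and clean: $\fC_2$ and $\fC_3$ have complex-conjugate coefficients, so if one were proportional to a real form both would be, and then they would be proportional to each other, making $F_6 = \fC_1\fC_2\fC_3$ singular and forcing $\Delta(F) = 0$, a contradiction. You need this (or an explicit verification that some coefficient ratio of $\fC_2$ is non-real) to close the $\chi(F) = 2$ case.
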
 

\begin{proof} When $\chi(F) = 4$, it is obvious that each $\fC_i$ is real and thus $\U_{i}$ is real as long as $\Delta(\fC_i)$ is positive. This holds for $i = 1,3$ but $D_2 < 0$, whence $\sqrt{D_2} = \mu_4 \sqrt{|D_2|}$. Therefore $\mu_4 \U_2  = U_{\fC_2} \in \Aut_\bR F$, as desired. \\

When $\chi(F) = 2$, from (\ref{Bean label}) and (\ref{Auto coeff}) we see that $\fC_1$ is real with positive discriminant, while $\fC_2, \fC_3$ are neither real nor purely imaginary. Moreover, neither can be proportional over $\bC$ to a real form. To see this, observe that from an examination of (\ref{Bean label}) we see that $\fC_2, \fC_3$ have coefficients which are conjugate in $\bC$. Thus, $\fC_2$ is proportional to a real form if and only if $\fC_3$ is proportional to a real form; and moreover, they must be proportional to each other. This implies that $F_6 = \fC_1 \fC_2 \fC_3$ is a singular form, which by Lemma \ref{quartic conj} and the proof of Lemma \ref{sextic Klein} shows that $F$ itself must have vanishing discriminant. \\ 


When $\chi(F) = 0$, we see that $\fC_1$ is real with positive discriminant while $\fC_2, \fC_3$ have coefficients which are purely imaginary, and thus multiplying by $i = \mu_4$ turns them into real quadratic forms. It thus follows that in each case, $\Aut_\bR F$ contains the sets given in the proposition. \\ \\
It remains to check that $\Aut_\bR F$ cannot be any larger in the cases when $I(F)$ or $J(F)$ vanishes. When $I(F) = 0$ this easily follows since $\Aut_\bR F$, being a finite subgroup of $\GL_2(\bR)$, cannot contain a copy of $\A_4$. When $J(F) = 0$ we see that the preimage of an order $4$ element in $\Aut_\bC^\ast(F)$ is necessarily an element $T$ of order $8$ in $\GL_2(\bR)$. Since all elements of order $8$ in $\GL_2(\bR)$ are conjugate, we may then assume $T \in \text{SO}_2(\bR)$. But then by letting $T$ permute the roots of a binary quartic form we see that $T$ necessarily sends $F$ to $-F$, hence $T \not \in \Aut_\bR F$. 
\end{proof}

We may now give a proof of Theorem \ref{BQFMT}. 

\begin{proof}[Proof of Theorem \ref{BQFMT}] By Proposition \ref{real quartic auto}, the potential non-trivial elements of $\Aut F$ are given explicitly in terms of the Cremona covariants. For each $\ep U_{\fC_i} \in \Aut_\bR F$, where $\ep \in \{1, \mu_4\}$, we have that $\ep U_{\fC_i} \in \Aut F$ only if $\fC_i$ is proportional over $\bC$ to an integral binary quadratic form $f_i$. In this case we have
\[\ep U_{\fC_i} = \frac{1}{\sqrt{|\Delta(f_i)|}} \begin{pmatrix} f_1 & 2 f_0 \\ -2 f_2 & -f_1 \end{pmatrix}.\]
Then we see that $\ep U_{\fC_i} \in \Aut F$ only if $|\Delta(f_i)|$ is a square; that is, $\fC_i$ is rationally good. We then see that these conditions are also sufficient for $\ep U_{\fC_i} \in \GL_2(\bQ)$. \\ \\
Therefore, when $F_6$ has no real quadratic rationally good significant factors, $\Aut_\bR F = \{\pm I_{2 \times 2}\}$. When $F_6$ has exactly one real quadratic rationally good significant factor $f$, it can have positive or negative discriminant, which will determine the order of $U_{f}$ in $\GL_2(\bR)$. If the $\Delta(f) < 0$ then $U_f$ will have order 4 and $\Aut_\bR F \cong \C_4$, and if $\Delta(f) > 0$ then $U_f$ has order 2 and $\Aut_\bR F \cong \C_2 \times \C_2$. Finally, if $F_6$ has three real quadratic rationally good significant factors $f_i, i = 1,2,3$ then $f_i$ is proportional over $\bR$ to $\ep U_{\fC_i}$ for $i = 1,2,3$ and so $\Aut F$ is as given by Proposition \ref{real quartic auto}. 
\end{proof}

\section{Cubic and quartic surfaces defined by binary forms and Theorem \ref{line thm}}
\label{surfaces}

In this section we apply our theorems characterizing the automorphism groups of binary cubic and quartic forms $F$ to study lines on the surface $X_F$ given in (\ref{surface def}), and to prove Theorem \ref{line thm}. \\ 

For a binary form $F$, denote by $\B_F$ the set of projective roots of $F$ in $\bP^1(\bC)$. Let $F_1, F_2$ be two binary forms with complex coefficients and non-zero discriminant. Put $\G(F_1, F_2)$ for the set of elements in $\PGL_2(\bC)$ which map $\B_{F_1}$ to $\B_{F_2}$. When $\deg F_1 \ne \deg F_2$, it is obvious that $\G(F_1, F_2)$ is empty. When $\deg F_1 = \deg F_2 = 3$, the set $\G(F_1, F_2)$ always consists of six elements since $\PGL_2(\bC)$ is $3$-transitive on $\bP^1(\bC)$. When $\deg F_1 = \deg F_2 = 4$, the cardinality of $\G(F_1, F_2)$ can be $0, 4, 8, 12$. Put
\begin{equation} \label{number of autos} \upsilon_{F_1, F_2} = \# \G(F_1, F_2) \end{equation}
and
\[\upsilon_F = \upsilon_{F, F}.\]

Consider the surface $X_{F_1, F_2}$ defined by
\[F_1(x_1, x_2) - F_2(x_3, x_4) = 0.\]
We then have the following, which is Theorem 3.1 in \cite{BSa}:

\begin{proposition}[Theorem 3.1 in \cite{BSa}] \label{BSa MT} Let $F_1, F_2$ be two binary forms with non-zero discriminant and equal degree $d$. Then the number of lines on the surface $X_{F_1, F_2}$ is equal to $d(d + \upsilon_{F_1, F_2})$. 
\end{proposition}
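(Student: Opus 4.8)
The plan is to write down two explicit families of lines on $X_{F_1,F_2}$, check that together they comprise $d^2+d\,\upsilon_{F_1,F_2}=d(d+\upsilon_{F_1,F_2})$ distinct lines, and then show these are all of them by analysing the two coordinate projections restricted to an arbitrary line.

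\emph{The two families.} Write $\B_{F_1},\B_{F_2}\subset\bP^1(\bC)$ for the root sets, which are multiplicity-free since $\Delta(F_1)\Delta(F_2)\ne 0$. To each pair $(\alpha,\beta)\in\B_{F_1}\times\B_{F_2}$ associate the line
\[L_{\alpha,\beta}=\{[x_1:x_2:x_3:x_4]\in\bP^3 : [x_1:x_2]=\alpha,\ [x_3:x_4]=\beta\}.\]
Since $F_1$ vanishes identically along $[x_1:x_2]=\alpha$ and $F_2$ along $[x_3:x_4]=\beta$, each $L_{\alpha,\beta}$ lies on $X_{F_1,F_2}$; and as $L_{\alpha,\beta}$ meets $\{x_3=x_4=0\}$ (resp.\ $\{x_1=x_2=0\}$) in the single point prescribed by $\alpha$ (resp.\ $\beta$), the $d^2$ lines $L_{\alpha,\beta}$ are pairwise distinct. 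For the second family, fix $T\in\G(F_1,F_2)$ and a lift $\widehat T\in\GL_2(\bC)$; since $\widehat T$ carries $\B_{F_1}$ to $\B_{F_2}$, the form $(F_2)_{\widehat T}$ and $F_1$ have the same (simple) roots, hence $(F_2)_{\widehat T}=\lambda F_1$ for a unique $\lambda\in\bC^\times$. For $\mu\in\bC^\times$ let $L_{T,\mu}$ be the line traced by $[u:v]\mapsto[u:v:\mu w_1:\mu w_2]$, where $(w_1,w_2)=\widehat T(u,v)$; along it
\[F_1(x_1,x_2)-F_2(x_3,x_4)=F_1(u,v)-\mu^d(F_2)_{\widehat T}(u,v)=(1-\mu^d\lambda)F_1(u,v),\]
so $L_{T,\mu}\subset X_{F_1,F_2}$ for exactly the $d$ scalars $\mu$ with $\mu^d\lambda=1$. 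One checks that this set of $d$ lines is independent of the lift $\widehat T$, that distinct $\mu$ give distinct lines (already at $[u:v]=[1:0]$), that distinct $T$ give disjoint sets of lines (a line recovers $T$ as an element of $\PGL_2(\bC)$), and that no $L_{T,\mu}$ equals any $L_{\alpha,\beta}$ (the projection $[x_1:x_2]$ is an isomorphism on the former, constant on the latter). Thus we obtain $d^2+d\,\upsilon_{F_1,F_2}$ distinct lines on $X_{F_1,F_2}$.

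\emph{Completeness.} Let $L\subset X_{F_1,F_2}$ be any line. Neither $\{x_1=x_2=0\}$ nor $\{x_3=x_4=0\}$ lies on $X_{F_1,F_2}$ (substituting gives $-F_2$, resp.\ $F_1$, which is not identically zero), so the projections $\pi_1=[x_1:x_2]$ and $\pi_2=[x_3:x_4]$ restrict to morphisms $L\to\bP^1$ of degree $0$ or $1$. If $\pi_1$ and $\pi_2$ are both nonconstant, each is an isomorphism; parametrising $L$ by $\pi_1$ we get $\pi_2=T\circ\pi_1$ for some $T\in\PGL_2(\bC)$, and $F_1|_L=F_2|_L$ forces $(F_2)_{\widehat T}$ and $F_1$ to be proportional, hence to have the same root set, i.e.\ $T\in\G(F_1,F_2)$; so $L=L_{T,\mu}$ for a suitable $\mu$. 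If $\pi_1$ is constant, equal to $\alpha\in\bP^1$, then $F_1|_L$ is either identically $0$ (when $\alpha\in\B_{F_1}$) or a nonzero scalar times the $d$-th power of one linear form on $L\cong\bP^1$; in the latter case $F_2|_L=F_1|_L$ is impossible, for if $\pi_2$ is nonconstant then $F_2|_L$ has $d\ge 2$ distinct roots, and if $\pi_2$ is also constant the equality of the two $d$-th powers forces some point of $L$ to have all four coordinates zero. Hence $\alpha\in\B_{F_1}$, and symmetrically the constant value of $\pi_2$ lies in $\B_{F_2}$, whence $L=L_{\alpha,\beta}$.

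\emph{Main obstacle.} The delicate part is the completeness step, in particular the bookkeeping for lines incident to $\{x_1=x_2=0\}$ or $\{x_3=x_4=0\}$ and the treatment of roots of $F_1F_2$ at infinity (vanishing leading coefficients). The cleanest fix is to use the action of $\GL_2(\bC)\times\GL_2(\bC)$ on the two pairs of variables, under which $X_{F_1,F_2}$, the two families of lines, and the integer $\upsilon_{F_1,F_2}$ all transform compatibly, reducing to the case where all roots of $F_1F_2$ are finite and the case division above is exhaustive.
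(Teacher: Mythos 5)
Your proof is correct and takes the same route as the source on which the paper relies: the paper gives no argument for Proposition \ref{BSa MT} beyond citing Theorem 3.1 of \cite{BSa}, and the dichotomy it records in Lemma \ref{line cat} is exactly your split into the $d^2$ root lines $L_{\alpha,\beta}$ and the $d\,\upsilon_{F_1,F_2}$ automorphism lines $L_{T,\mu}$, so you have in effect supplied the details the paper only references. The only point to make explicit in the completeness step is that once $\alpha\in\B_{F_1}$ forces $F_1$ (hence $F_2$) to vanish identically on $L$, the projection $\pi_2$ must itself be constant, since a nonconstant $\pi_2$ would make $F_2|_L$ a nonzero form; this is implicit in your ``symmetrically'' but deserves a sentence (and note that your worry about roots at infinity is moot, as your projective formulation of $\B_{F_1},\B_{F_2}$ already covers them).
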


By Proposition \ref{BSa MT}, the number of lines on a surface $X_F$ with $F$ a binary form with non-zero discriminant is completely determined by $\Aut_\bC^\ast F$. To prove Theorem \ref{line thm}, however, we shall need the following refinement of Proposition \ref{BSa MT}, which is contained in the proof of Theorem 3.1 in \cite{BSa}. \\ \\
We shall denote a projective point in $\bP^3$ by $[x_1 : x_2 : x_3 : x_4]$. Let $F$ be a binary form of degree $d$ with complex coefficients and non-zero discriminant. By applying a $\GL_2(\bC)$ transformation, we may assume that the leading coefficient of $F$ is non-zero.

\begin{lemma} \label{line cat}  Let $\psi_1, \cdots, \psi_d$ denote the roots of $F(x,1)$. Then all lines on the surface $X_{F}$ are in exactly one of the following two categories:
\begin{itemize}
\item[(a)] (Root lines) $L = \{[s \psi_j : s : t \psi_k : t] \in \bP^3: s,t \in \bC \}$ for some $1 \leq j,k \leq d$, or
\item[(b)] (Automorphism lines) There exists an automorphism 
\[T = \begin{pmatrix} t_1 & t_2 \\ t_3 & t_4 \end{pmatrix} \in \Aut_\bC F\]
such that
\[L = \{[u : v : t_1 u + t_2 v : t_3 u + t_4 v] \in \bP^3 : u, v \in \bC\}.\]
\end{itemize}
\end{lemma}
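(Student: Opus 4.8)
The plan is to directly analyze an arbitrary line $L \subset X_F \subset \bP^3$ and show that it must be of the form (a) or (b). Write $L$ parametrically. Since $F$ has non-vanishing leading coefficient, the surface $X_F$ meets the hyperplane $\{x_2 = 0\}$ in the curve $F(x_1,0) - F(x_3,x_4) = 0$, i.e. $a_d x_1^d = F(x_3,x_4)$ (where $a_d$ is the leading coefficient). First I would dispose of the lines lying entirely in a coordinate hyperplane $\{x_2 = 0\}$ or $\{x_4 = 0\}$: a line in $\{x_2 = 0\}$ projects to a line in the plane curve $a_d x_1^d = F(x_3,x_4)$, which for $d \ge 3$ and $\Delta(F) \ne 0$ contains no lines (it is a smooth plane curve of degree $d$, or more simply, it cannot contain a line for degree reasons unless $d$ is small); the same applies to $\{x_4 = 0\}$. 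So we may assume $L$ meets neither, and after rescaling parametrize it as
\[
L = \{[\alpha(s,t) : s : \gamma(s,t) : t] : (s,t) \in \bC^2\}
\]
where $\alpha, \gamma$ are linear forms in $s,t$. Substituting into $F(x_1,x_2) = F(x_3,x_4)$ gives the polynomial identity $F(\alpha(s,t), s) = F(\gamma(s,t), t)$ in $\bC[s,t]$.

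Next I would exploit this identity by specialization. Setting $t = 0$: the right side becomes $F(\gamma(s,0), 0) = a_d \gamma(s,0)^d$, a $d$-th power (times a constant), so the left side $F(\alpha(s,0), s)$, a binary form of degree $d$ in $s$ (after homogenizing — but here it is a polynomial in $s$ alone), must be a constant times $s^{?}$... more carefully, I would homogenize: write $\alpha(s,t) = \alpha_1 s + \alpha_2 t$, $\gamma(s,t) = \gamma_1 s + \gamma_2 t$, and regard $F(\alpha_1 s + \alpha_2 t, s) = F(\gamma_1 s + \gamma_2 t, t)$ as an identity of binary forms of degree $d$ in $(s,t)$. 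Two binary forms of degree $d$ are equal iff they have the same roots with multiplicity and the same leading coefficient. The left side factors through the roots $\psi_j$ of $F(x,1)$: $F(\alpha_1 s + \alpha_2 t, s) = a_d \prod_j (\alpha_1 s + \alpha_2 t - \psi_j s) = a_d \prod_j ((\alpha_1 - \psi_j) s + \alpha_2 t)$, and similarly the right side is $a_d \prod_k ((\gamma_1 - \psi_k) t + \gamma_2 s)$... wait, $= a_d \prod_k (\gamma_1 s + \gamma_2 t - \psi_k t) = a_d \prod_k (\gamma_1 s + (\gamma_2 - \psi_k) t)$. Matching these two factorizations of the same binary form (unique up to ordering of linear factors) is now a purely combinatorial/linear-algebraic problem: there is a bijection $\sigma$ of $\{1,\dots,d\}$ and scalars $\lambda_k$ with $\gamma_1 s + (\gamma_2 - \psi_k)t = \lambda_k\big((\alpha_1 - \psi_{\sigma(k)})s + \alpha_2 t\big)$ for each $k$, and the leading coefficients force $\prod \lambda_k = 1$.

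From this system I would extract the dichotomy. If $\alpha_2 = 0$, then every factor on the left is a multiple of $s$, so every factor on the right is too, forcing $\gamma_2 = \psi_k$ for all $k$ — impossible since the $\psi_k$ are distinct — unless $d = 1$; hence actually $\alpha_2 = 0$ forces all the $\psi_{\sigma(k)}$ terms to collapse, i.e. $\alpha_1 - \psi_{\sigma(k)} = 0$ is the only way, again impossible for $d \ge 2$ with distinct roots. Re-examining: the genuinely degenerate case is when one of the linear forms is proportional to $s$ or to $t$, which (tracing through) forces $\alpha_1 = \psi_j$ and $\gamma_1 = \psi_k$ for some single $j,k$, giving $\alpha(s,t) = \psi_j s$, $\gamma(s,t) = \psi_k t$ — this is precisely a root line (a). Otherwise $\alpha_2 \ne 0$ and $\gamma_2 \ne 0$ (and $\gamma_1 \ne 0$, else the left side is divisible by $t$ forcing again a root-line degeneration), so each $\lambda_k = \gamma_1/(\alpha_1 - \psi_{\sigma(k)})$ is determined, and then comparing the $t$-coefficients gives $\gamma_2 - \psi_k = \lambda_k \alpha_2 = \gamma_1 \alpha_2/(\alpha_1 - \psi_{\sigma(k)})$. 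Rearranged, this says $\psi_k = \frac{(\gamma_1 \alpha_2 - \gamma_2(\alpha_1 - \psi_{\sigma(k)}))}{-(\alpha_1-\psi_{\sigma(k)})}$, i.e. $\psi_k$ is the image of $\psi_{\sigma(k)}$ under a fixed Möbius transformation determined by $\alpha_1,\alpha_2,\gamma_1,\gamma_2$. Thus this Möbius transformation permutes $\B_F$, i.e. lifts to $T \in \Aut_\bC^\ast F$, and unwinding the parametrization shows $L$ is exactly the automorphism line attached to (a lift of) $T$ — and the leading-coefficient condition $\prod \lambda_k = 1$ is precisely the condition pinning down the correct scalar lift $T \in \Aut_\bC F$ (not merely $\Aut_\bC^\ast F$). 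The main obstacle, and the only place requiring care, is the bookkeeping in matching the two factorizations and verifying that the "leading coefficient $= 1$" constraint is equivalent to $T \in \Aut_\bC F$ rather than a projective lift; once the Vieta/leading-coefficient computation (analogous to the one in the proof of Proposition \ref{quartic complex}) is done, everything else is linear algebra. Finally I would note that cases (a) and (b) can overlap only when a root line happens to coincide with an automorphism line, which does not affect the statement.
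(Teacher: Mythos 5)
Your overall strategy --- parametrize a line $L\subset X_F$, substitute into $F(x_1,x_2)=F(x_3,x_4)$, and match the two factorizations of the resulting binary form in $(s,t)$ into linear factors --- is the right one; it is essentially the argument of Theorem 3.1 in \cite{BSa} together with Heath-Brown's treatment of the automorphism lines, which is all the paper itself offers for this lemma, and your matching computation (the Möbius transformation $\psi_{\sigma(k)}\mapsto\psi_k$ determined by $\alpha_1,\alpha_2,\gamma_1,\gamma_2$, and the observation that the identity $F(u,v)=F(t_1u+t_2v,t_3u+t_4v)$ pins down an element of $\Aut_\bC F$ rather than merely of $\Aut_\bC^\ast F$) is correct. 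However, there is a genuine gap in the case analysis. After discarding lines contained in $\{x_2=0\}$ or $\{x_4=0\}$, you assert that every remaining line can be written as $\{[\alpha(s,t):s:\gamma(s,t):t]\}$. That parametrization requires $x_2$ and $x_4$ to restrict to linearly \emph{independent} linear forms on $L$, which is strictly stronger than $L$ not lying in either hyperplane: it fails exactly when $L$ meets the line $\{x_2=x_4=0\}$, in which case $x_4=c\,x_2$ on $L$ for some $c\neq 0$. Such lines genuinely occur on $X_F$ and are not exotic: the diagonal line $\{[u:v:u:v]\}$ (the automorphism line of $T=I_{2\times 2}$), and more generally every automorphism line attached to an upper-triangular $T$ (those with $t_3=0$, including the scalar automorphisms $\mu_d I_{2\times 2}$), has $x_2$ proportional to $x_4$. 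As written, your argument therefore fails to account for the most basic lines of type (b).

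The gap is repairable by the same method: such a line can be written as $\{[\alpha_1u+\alpha_2v:v:\gamma_1u+\gamma_2v:cv]\}$ with $c\neq0$, the identity $F(\alpha_1u+\alpha_2v,v)=F(\gamma_1u+\gamma_2v,cv)$ forces $\alpha_1,\gamma_1\neq0$ (else one side is a nonzero multiple of $v^d$ and the other is not), the factor matching then yields a single scalar $\lambda_k=\gamma_1/\alpha_1$ for all $k$ and an \emph{affine} map permuting the roots, and reparametrizing by $(x_1,x_2)$ exhibits $L$ as the automorphism line of an upper-triangular $T\in\Aut_\bC F$. You should add this third branch explicitly. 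Two further small points: your dismissal of lines inside $\{x_2=0\}$ deserves one sentence of justification (a linear factor of $a_dx_1^d-F(x_3,x_4)$ would force $F$ to be a $d$-th power of a linear form, contradicting $\Delta(F)\neq0$), and in the root-line branch the condition you derive should read $\gamma_1=0$, $\gamma_2=\psi_k$ (you wrote $\gamma_1=\psi_k$, though your stated conclusion $\gamma(s,t)=\psi_k t$ is the correct one). The mutual exclusivity of (a) and (b) is immediate since $x_1$ and $x_2$ are proportional on a root line but independent on an automorphism line.
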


\begin{proof} The proof of Lemma \ref{line cat} follows from the proof of Theorem 3.1 in \cite{BSa}. The second part of Lemma \ref{line cat} was done by Heath-Brown in \cite{HB1}. 
\end{proof}

\subsection{Cubic surfaces}

We shall state a more precise version of part (a) of Theorem \ref{line thm}. For a given binary form $F$ with integer coefficients and non-zero discriminant, write $K$ for the field of smallest degree for which all lines contained in the surface $X_F$ defined by (\ref{surface def}) is defined over $K$. We put $\bF$ for the splitting field of $F$, and we shall denote by $\mu_3$ for a primitive third root of unity. We will prove the following for cubic surfaces $X_F$ defined (\ref{surface def}) and a binary cubic form $F(x,y)$:

\begin{proposition} \label{cubic surface} Let $F$ be a binary cubic form with integer coefficients and non-zero discriminant. Then $K = \bF(\mu_3)$. 
\end{proposition}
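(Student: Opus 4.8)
The plan is to identify $K$ directly using the explicit description of the lines on $X_F$ provided by Lemma~\ref{line cat} together with our computation of $\Aut_\bC^\ast F$ in Section~\ref{cubics}. By Proposition~\ref{BSa MT}, $X_F$ contains exactly $27$ lines, which by Lemma~\ref{line cat} split into two families: the $9$ root lines $L_{j,k}=\{[s\psi_j:s:t\psi_k:t]\}$ for $1\le j,k\le 3$, and the $18$ automorphism lines indexed by the elements of $\Aut_\bC F$ (equivalently, by the six elements of $\Aut_\bC^\ast F$, each lifted with a choice of cube root of unity scaling; since over $\bC$ the kernel $\{\mu_3 I\}$ contributes the factor of $3$, we get $6\cdot 3 = 18$). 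I will argue that all $27$ lines are defined over $\bF(\mu_3)$, and that neither $\bF$ nor any proper subextension of $\bF(\mu_3)/\bQ$ suffices, so that $K=\bF(\mu_3)$ exactly.

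First I would show $K\subseteq \bF(\mu_3)$. The root lines $L_{j,k}$ are manifestly defined over $\bQ(\psi_1,\psi_2,\psi_3)=\bF$, so they contribute nothing beyond $\bF$. For the automorphism lines, I would invoke Proposition~\ref{Julia gen}: a set of representatives of $\Aut_\bC^\ast F$ in $\GL_2(\bC)$ is $\{I, \T_{\theta_1},\T_{\theta_2},\T_{\theta_3},\N_{q_F},\N_{q_F}^2\}$. The matrices $\T_{\theta_i}$ have entries in $\bF$ by the explicit formulas (\ref{Julia cov}), (\ref{mat 1}), (\ref{Ttheta}) (the Julia covariant $J_{\theta_i}$ has coefficients in $\bQ(\theta_i)\subseteq\bF$), while $\N_{q_F}$ involves $\sqrt{-3\Delta(q_F)}$; since $\Delta(q_F)=-3\Delta(F)$ we have $-3\Delta(q_F)=9\Delta(F)$, so $\sqrt{-3\Delta(q_F)}=3\sqrt{\Delta(F)}$, which lies in $\bF$ (the splitting field of a cubic always contains $\sqrt{\Delta(F)}$). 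Hence every representative in $\Aut_\bC^\ast F$ has entries in $\bF$. But an automorphism line as in Lemma~\ref{line cat}(b) is determined by a lift $T\in\Aut_\bC F$, i.e.\ by one of these representatives scaled by a cube root of unity; the scaling $\mu_3 I$ changes the line to $\{[u:v:\mu_3(t_1u+t_2v):\mu_3(t_3u+t_4v)]\}$, whose defining linear equations have coefficients in $\bF(\mu_3)$. Therefore all $18$ automorphism lines are defined over $\bF(\mu_3)$, giving $K\subseteq\bF(\mu_3)$.

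For the reverse inclusion $K\supseteq\bF(\mu_3)$ I would argue that the lines are not all defined over any field missing either $\bF$ or $\mu_3$. That $\bF\subseteq K$ follows because among the root lines the line $L_{1,1}=\{[s\psi_1:s:t\psi_1:t]\}$ (and its Galois conjugates) forces $\psi_1\in K$; running over $j$ forces all roots, so $\bF\subseteq K$. It remains to see $\mu_3\in K$, which is the only delicate point. If $\mu_3\in\bF$ already (equivalently $\bF$ contains $\bQ(\sqrt{-3})$) there is nothing to prove, so assume $\mu_3\notin\bF$; then I must exhibit an automorphism line genuinely requiring $\mu_3$. The natural candidate is a line coming from the lift $\mu_3 I\in\Aut_\bC F$ (the nontrivial scalar automorphism), namely $L=\{[u:v:\mu_3 u:\mu_3 v]\}$: its defining equations are $x_3=\mu_3 x_1$, $x_4=\mu_3 x_2$, which cannot be rewritten over any field not containing $\mu_3$ — indeed the line $\{x_3=\lambda x_1, x_4=\lambda x_2\}$ lies on $X_F$ iff $F(x_1,x_2)=F(\lambda x_1,\lambda x_2)=\lambda^3 F(x_1,x_2)$, i.e.\ iff $\lambda^3=1$, and the three such lines are permuted transitively by $\Gal(\bF(\mu_3)/\bF)$, so none individually, hence not all $27$ together, can be defined over a field avoiding $\mu_3$. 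Combining, $K$ contains $\bF$ and $\mu_3$, so $\bF(\mu_3)\subseteq K$, and with the previous paragraph $K=\bF(\mu_3)$.

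The main obstacle I anticipate is the lower bound argument, specifically being careful about what ``the field $K$ of smallest degree over which all lines are defined'' means and checking that the three scalar lines $\{x_3=\lambda x_1,x_4=\lambda x_2\}$ with $\lambda^3=1$ really do occur among the $27$ lines of $X_F$ and are Galois-conjugate over $\bF$ — this is where Lemma~\ref{line cat}(b) applied to $T=\mu_3 I, \mu_3^2 I, I$ does the work, and one must make sure the scalar matrices genuinely lie in $\Aut_\bC F$ (they do, since $F_{\mu_3 I}=\mu_3^3 F=F$). The upper bound is essentially bookkeeping once Proposition~\ref{Julia gen} and the identity $\sqrt{-3\Delta(q_F)}=3\sqrt{\Delta(F)}\in\bF$ are in hand.
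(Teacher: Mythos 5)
Your proof is correct, and it is in fact more complete than the one in the paper. The paper's proof establishes only the containment $K \subseteq \bF(\mu_3)$: it checks that the root lines are defined over $\bF$ and that the representatives $\T_{\theta_i}$, $\N_{q_F}$, $\N_{q_F}^2$ are defined over $\bF(\mu_3)$, and stops there; the reverse inclusion, which is needed for the asserted equality $K = \bF(\mu_3)$ (though not for the bound $[K:\bQ]\le 12$ used in Theorem \ref{line thm}), is never argued. Your lower-bound paragraph supplies exactly the missing half: the root lines force $\bF \subseteq K$, and the three scalar automorphism lines $\{x_3 = \lambda x_1,\ x_4 = \lambda x_2\}$ with $\lambda^3 = 1$, coming from the kernel $\{\mu_3 I_{2\times 2}\}$ of $\Aut_\bC F \to \Aut_\bC^\ast F$ in Lemma \ref{line cat}(b), force $\mu_3 \in K$. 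You have also correctly pinpointed where $\mu_3$ actually enters: the radical appearing in $\N_{q_F}$ is $\sqrt{-3\Delta(q_F)} = \sqrt{9\Delta(F)} = 3\sqrt{\Delta(F)}$, which already lies in $\bF$, so $\N_{q_F}$ is $\bF$-rational (indeed $\bQ(\sqrt{\Delta(F)})$-rational, consistent with Theorem \ref{BCFMT}(2)). The paper instead attributes the need for $\mu_3$ to a term $\sqrt{-3\Delta(F)}$ in $\N_{q_F}$, which is a slip ($\Delta(F)$ written for $\Delta(q_F)$); the true source of $\mu_3$ is the scalar lifts, as you identify. The only point worth tightening in your write-up is the brief justification that a line such as $L_{1,1}$ or $\{x_3 = \mu_3 x_1, x_4 = \mu_3 x_2\}$ has $\bQ(\psi_1)$, respectively $\bQ(\mu_3)$, as its exact field of definition; this follows by checking Galois stability on two spanning points of the line, and you gesture at this correctly with the conjugacy of the three scalar lines over $\bF$.
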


\begin{proof} From Proposition \ref{BSa MT} we see that the root lines of $X_F$ are defined over $\bF$. For the automorphism lines, we see that $q_F$, and hence $\M_{q_F}$, is defined over $\bQ$ whenever $F$ has integer coefficients, and that $\M_{\J_\theta}$ is defined over $\bF$. Hence, $\T_{\theta}$ is defined over $\bF$. Note that the definition $\N_{q_F}$ and $\N_{q_F}^2$ involves the term $\sqrt{-3\Delta(F)}$, which may not lie in $\bF$. We note however that $\sqrt{-3} \in \bQ(\mu_3)$ and $\sqrt{\Delta(F)} \in \bF$, since it is the product of the differences of the roots of $F(x,1)$. Therefore, all of the automorphisms of $F$ are defined over $\bF(\mu_3)$ and hence all of the automorphism lines are defined over $\bF(\mu_3)$. 
\end{proof}

Finally, it is clear that Proposition \ref{cubic surface} implies the cubic case of Theorem \ref{line thm}, since $[\bF(\mu_3) : \bQ] \leq 12$.  

\subsection{Quartic surfaces} 

Let $F$ be a binary quartic form with integer coefficients and non-zero discriminant. By Propositions \ref{PGL auto prop} and \ref{BSa MT}, the surface $X_F$ contains either $32, 48,$ or $64$ lines depending on whether $I,J$ vanish. Put $\sigma_F$ for the number of lines contained in the surface $X_F$ given in (\ref{surface def}). We then have the following proposition:

\begin{proposition} Let $F$ be a binary quartic form with integer coefficients and non-zero discriminant. \begin{enumerate}
\item If $I(F), J(F)$ are both non-zero, then $\sigma_F = 32$. Moreover, $[K:\bQ] \leq 48$ with equality holding whenever $\Gal F \cong \S_4$ and $\Delta(F)$ is not the negative of a square integer. 
\item If $I(F) = 0$, then  $\sigma_F = 64$.
\item If $J(F) = 0$, then $\sigma_F = 48$. 
\end{enumerate}
\end{proposition}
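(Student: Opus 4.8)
The plan is to treat the three cases of the proposition in turn, with the bulk of the work going into case (1). First I would note that the cardinalities $\sigma_F = 32, 48, 64$ are immediate: by Proposition \ref{PGL auto prop}, $\upsilon_F = \#\Aut_\bC^\ast F$ equals $4, 8, 12$ according to whether $I(F)J(F) \ne 0$, $J(F) = 0$, or $I(F) = 0$ respectively, and then Proposition \ref{BSa MT} gives $\sigma_F = 4(4 + \upsilon_F) = 32, 48, 64$. (One must note $J(F)=0$ and $I(F)=0$ are mutually exclusive since $\Delta(F) = (4I(F)^3 - J(F)^2)/27 \ne 0$.) So the content is entirely the bound $[K:\bQ] \le 48$ in case (1) and the equality claim.

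For the field of definition in case (1), I would invoke Lemma \ref{line cat}: every line on $X_F$ is either a root line, parametrized by a pair $(\psi_j, \psi_k)$ of roots of $F(x,1)$, or an automorphism line, parametrized by an element $T \in \Aut_\bC F$. The root lines are therefore all defined over the splitting field $\bF$ of $F$, which satisfies $[\bF : \bQ] \le 24$. For the automorphism lines, I would use Proposition \ref{quartic complex}: when $I(F)J(F) \ne 0$, a set of representatives for $\Aut_\bC^\ast F$ in $\GL_2(\bC)$ is $\{I, \U_1, \U_2, \U_3\}$ with $\U_i = D_i^{-1/2}\M_{\fC_i}$, and $\Aut_\bC F$ is generated by these together with $\mu_4 I$ (the kernel of $\Aut_\bC F \to \Aut_\bC^\ast F$ being the $4$th roots of unity times $I$). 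The Cremona covariants $\fC_i$, being covariants with coefficients polynomial in the $a_j$, have coefficients in $\bQ$ once we only ask for them up to the symmetric functions — more precisely $\fC_1\fC_2\fC_3 = F_6$ has rational coefficients, the individual $\fC_i$ have coefficients in $\bF$ (they are built from the roots $\theta_j$ via \eqref{Auto coeff}), and $D_i = \Delta(\fC_i) \in \bF$. Hence each $\U_i$, and each automorphism line, is defined over $\bF(\mu_4, \sqrt{D_1}, \sqrt{D_2}, \sqrt{D_3})$. Now $D_1 D_2 D_3 = \Delta(F_6)$ is (a rational multiple of) a power of $\Delta(F)$, so $\sqrt{D_1 D_2 D_3} \in \bF(\sqrt{\Delta(F)}) = \bF$; thus at most two of the three square roots $\sqrt{D_i}$ are independent, and $K \subseteq \bF(\mu_4, \sqrt{D_1}, \sqrt{D_2})$, which has degree at most $24 \cdot 2 = 48$ over $\bQ$ — but I need to argue $\mu_4$ is already accounted for or costs nothing extra, which I would do by a more careful degree count: $[\bF:\bQ] \le 24$, and adjoining $\mu_4$ and the two square roots multiplies the degree by at most $2 \cdot 2 \cdot 2 = 8$ only if $[\bF:\bQ] \le 6$; in the generic case $\Gal F \cong \S_4$ we have $[\bF:\bQ] = 24$ and the extension $K/\bF$ has degree at most $2$ (see below), giving $[K:\bQ] \le 48$. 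The uniform bound $[K:\bQ] \le 48$ then follows by checking the smaller Galois group cases (where $[\bF:\bQ] \in \{1,2,3,4,6,8,12\}$) leave enough room for the at most three quadratic extensions $\mu_4, \sqrt{D_1}, \sqrt{D_2}$.

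The equality statement is where the real care is needed, and I expect it to be the main obstacle. When $\Gal F \cong \S_4$, we have $[\bF:\bQ] = 24$ and $\bF$ is totally ramified enough that $\mu_4 \notin \bF$ is not automatic — in fact one must show $K/\bF$ is exactly quadratic, not trivial. The key point: $\sqrt{D_1 D_2 D_3} = \sqrt{\Delta(F_6)}$ lies in $\bF$, and I would compute $\Delta(F_6)$ as an explicit power of $\Delta(F)$ times a rational constant (from the normal form $F_6 \propto xy(x^4 - y^4)$ this constant and power are fixed); then $\sqrt{D_i}\sqrt{D_j} \in \bF \iff \sqrt{D_k} \in \bF$ for $\{i,j,k\} = \{1,2,3\}$. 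One then shows that, generically, $\sqrt{D_1} \notin \bF$ — this is where the hypothesis that $\Delta(F)$ is not the negative of a square comes in, since $D_2 < 0$ forces $\sqrt{D_2} = \mu_4\sqrt{|D_2|}$ and intertwines the $\mu_4$ with the square-root extensions. The cleanest route is: the $\S_4$-action permutes $\{\fC_1, \fC_2, \fC_3\}$ through the surjection $\S_4 \twoheadrightarrow \S_3$, and hence permutes $\{D_1, D_2, D_3\}$; the extension $\bF(\sqrt{D_1}, \sqrt{D_2})/\bF$ is thus an $\S_3$-stable biquadratic extension, and computing the associated characters one sees it is nontrivial (degree $2$ or $4$) unless $\Delta(F)$ (equivalently each $D_i$, up to squares in $\bF$) is a square in $\bF$ — which for $\S_4$ forces $\sqrt{\Delta(F)} \in \bF$ hence $\Gal F \subseteq \A_4$, contradiction. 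Combined with the separate contribution of $\mu_4$, keeping track of whether $D_2 < 0$ makes $\mu_4$ lie in $\bF(\sqrt{D_2})$ or not, one pins down $[K:\bF] = 2$ exactly when $\Gal F \cong \S_4$ and $-\Delta(F)$ is not a square, giving $[K:\bQ] = 48$. Cases (2) and (3) need no field-of-definition claim and are dispatched by the cardinality count above.
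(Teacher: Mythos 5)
Your computation of $\sigma_F$ agrees with the paper: Proposition \ref{PGL auto prop} gives $\upsilon_F\in\{4,8,12\}$ and Proposition \ref{BSa MT} gives $\sigma_F=4(4+\upsilon_F)$, so that part is fine. The problem is the field-of-definition claim in case (1), where your write-up contains a genuine gap that you flag but never close. You correctly observe --- and this is actually more careful than the paper's own proof, which simply declares that the lines corresponding to $\M_{\fC_1},\M_{\fC_2},\M_{\fC_3}$ are defined over $\bF$ --- that the automorphism lines of Lemma \ref{line cat} are indexed by \emph{exact} automorphisms $T$ with $F_T=F$, i.e.\ by the sixteen matrices $\mu_4^k\,\M_{\fC_i}/\sqrt{D_i}$, and that the field of definition of such a line is generated by the actual matrix entries and hence involves $\sqrt{D_i}$. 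But having reduced to $K\subseteq\bF(\mu_4,\sqrt{D_1},\sqrt{D_2})$, you assert this field has degree at most $24\cdot 2=48$: a priori it has degree up to $24\cdot 8=192$, and the promised justification that $[K:\bF]\le 2$ (``see below'') never materializes --- the Galois-character argument you sketch only concludes that $\bF(\sqrt{D_1},\sqrt{D_2})/\bF$ is nontrivial of degree $2$ or $4$, which is the opposite of what you need. So neither the bound $[K:\bQ]\le 48$ nor the equality criterion is actually proved.

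Worse, the gap does not appear closable, which suggests the statement itself (and the paper's proof, which in effect replaces $\U_i$ by $\M_{\fC_i}$ and concludes $K=\bF(\mu_4)$) needs repair. Take $F(x,y)=x^4-xy^3-y^4$, so $\Gal F\cong\S_4$, $\Delta(F)=-283$, and $I(F)J(F)\ne 0$. Writing $z_1=\theta_1\theta_2+\theta_3\theta_4$, one has $(\theta_1-\theta_3)(\theta_2-\theta_4)=z_1-z_3$ and $(\theta_1-\theta_4)(\theta_2-\theta_3)=z_1-z_2$, so the corresponding $D_i$ equals $4(z_1-z_2)(z_1-z_3)=4(3z_1^2+4)$ with $z_1$ a root of the resolvent cubic $z^3+4z-1$ of discriminant $-283$. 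Since $\bZ[z_1]$ is the maximal order, the ideal $(3z_1^2+4)$ is the different of $\bQ(z_1)/\bQ$, i.e.\ the unique ramified prime $\fp$ above $283$, so $v_\fp(3z_1^2+4)=1$; because $283$ exactly divides $\Delta(F)$, inertia at $283$ in $\bF$ is a transposition, so every prime of $\bF(\mu_4)$ above $\fp$ is unramified over $\fp$ and $\pm D_i$ has odd valuation there. Hence $\sqrt{D_i}\notin\bF(\mu_4)$, the line attached to $\U_i$ is not defined over $\bF(\mu_4)$, and since $\mu_4\notin\bF$ (the only quadratic subfield of $\bF$ is $\bQ(\sqrt{-283})$) one gets $[K:\bQ]\ge 2\,[\bF(\mu_4):\bQ]=96$. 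To salvage the result you would have to either enlarge the field in the statement by the $\sqrt{D_i}$ (changing the bound) or prove the $D_i$ are squares in $\bF(\mu_4)$, and the example above indicates the latter fails in general.
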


\begin{proof} By Propositions \ref{PGL auto prop}, the size of $\Aut_\bC^\ast F$ is $4, 8, 12$ respectively when $I(F), J(F)$ are both non-zero, when $J(F) = 0$ and $I(F) \ne 0$, and $I(F) = 0$ with $J(F) \ne 0$. Thus, by Proposition \ref{BSa MT}, we have
\[\sigma_F = \begin{cases} 4(4 + 4) = 32 & \text{if } I(F) J(F) \ne 0, \\
4(4 + 8) = 48 & \text{if } J(F) = 0, I(F) \ne 0, \\
4(4+ 12) = 64 & \text{if } I(F) = 0, J(F) \ne 0. \end{cases}\]
We now prove part (1) of the proposition. The treatment of the root lines is the same as the cubic case, and it is clear that the root lines are defined over $\bF$. For the automorphism lines, the lines corresponding to $\M_{\fC_1}, \M_{\fC_2}, \M_{\fC_3}$ are defined over $\bF$ by (\ref{Auto coeff}). The remaining automorphism lines are defined after adjoining $\mu_4$ to $\bF$, whence $K = \bF(\mu_4)$. Moreover $\bF$ is at most a degree $24$ extension over $\bQ$, thus $[K : \bQ] \leq 48$. Observe that equality holds only when $[\bF : \bQ] = 24$, which implies that $\Gal F \cong \S_4$, and that $\bF(\mu_4) \ne \bF$. This condition is equivalent to $\mu_4 \not \in \bF$, and an elementary exercise in Galois theory yields that this happens if and only if $\Delta(F)$ is not the negative of a square integer. 
 \end{proof}

\end{document}